\documentclass[
final
]{dmtcs-episciences}


\usepackage[utf8]{inputenc}
\usepackage{subfigure}

\usepackage[round]{natbib}

\usepackage{amsmath}
\usepackage{amssymb}
\usepackage{amsthm}
\usepackage{multicol}
\usepackage{comment}
\usepackage{xcolor}
\usepackage{thmtools}
\usepackage{tabularx}


\newcommand{\calL}{\mathop{\mathcal{L}}}
\newcommand{\calG}{\mathop{\mathcal{G}}}

\newcommand{\calM}{\mathop{\mathcal{M}}}
\newcommand{\calS}{\mathop{\mathcal{S}}}
\newcommand{\calC}{\mathop{\mathcal{C}}}

\theoremstyle{plain}
\newtheorem{thm}{Theorem}[section]
\newtheorem{lem}[thm]{Lemma}
\newtheorem{cor}[thm]{Corollary}

\newtheorem{prop}[thm]{Proposition}

\newtheorem{obs}[thm]{Observation}

\theoremstyle{definition}
\newtheorem{defn}[thm]{Definition}
\newtheorem*{convention}{Convention}
\newtheorem*{notation}{Notation}
\newtheorem{exam}[thm]{Example}
\newtheorem{hypothesis}{Hypothesis}
\newtheorem{question}{Question}
\newtheorem{algorithm}[thm]{Algorithm}

\theoremstyle{remark}
\newtheorem*{rem}{Remark}

\newtheorem*{note}{Note}

\author[K. Adaricheva et al.]{K. Adaricheva\affiliationmark{1}
  \and A. Mata\affiliationmark{2}
  \and S. Silberger\affiliationmark{1}
  \and A. Zamojska-Dzienio\affiliationmark{2}}

\title[Maximal Sublattices of Finite SD-lattices]{Conjecture on Maximal
  Sublattices of Finite Semidistributive Lattices and Beyond}

\affiliation{
  Department of Mathematics, Hofstra University, Hempstead NY, USA\\
  Faculty of Mathematics and Information Science, Warsaw University of
  Technology, Warsaw, Poland}

\keywords{convex geometry, convex dimension, maximal sublattice,
  semidistributive lattice}

\begin{document}

\publicationdata{vol. 28:2}{2026}{32}{10.46298/dmtcs.16374}{2025-08-20; 2025-08-20; 2026-05-13}{2026-05-19}

\maketitle

\begin{center}
\emph{Dedicated to the memory of Donald Morison Silberger,
February 26, 1930 -- July 22, 2025.}
\end{center}

\noindent\rule{\linewidth}{0.4pt}

\begin{abstract}
We study maximal sublattices of finite semidistributive lattices via
their complements. We focus on the conjecture that such complements
are always intervals, which is known to be true for bounded lattices.
Since the class of semidistributive lattices is the intersection of
classes of join- and meet-semidistributive lattices, we study also
complements for these classes, and in particular convex geometries of
convex dimension 2, which is a subclass of join-semidistributive
lattices. In the latter case, we describe the complements of maximal
sublattices completely, as well as the procedure of finding all
complements of maximal sublattices.
\end{abstract}

\section{Introduction}\label{S:Intro}
\subsection{Background and motivation}

 A proper sublattice \begin{math}\calS\end{math} of a lattice \begin{math}\calL\end{math} is said to be {\em maximal} if for every \begin{math}x\in\calL\setminus\calS\end{math}, the sublattice generated by \begin{math}\calS\cup \{ x\}\end{math} is all of \begin{math}\calL\end{math}. Maximal sublattices of finite distributive lattices were fully described in  \cite[Theorem 4]{Chen73} and \cite[Theorem 3]{R73}:  the complements of maximal sublattices of distributive lattices are always intervals of the form \begin{math}\left[a, b\right] = \{x \in L\ : \ a \leq x \leq b\}\end{math} with \begin{math}a\end{math} being a unique join-irreducible element, and \begin{math}b\end{math} being a unique meet-irreducible one in the interval.

In \cite[Lemma 1]{R73} it was first shown that the complement of a proper \begin{math}(0,1)-\end{math} sublattice of a lattice of finite length (in particular, for a finite one) contains an interval \begin{math}\left[a, b\right]\end{math} with \begin{math}a\end{math} being a join-irreducible element and \begin{math}b\end{math} being a meet-irreducible one. We are interested in the complements of maximal sublattices (which are in relevant  cases \begin{math}(0,1)-\end{math} sublattices).

In this paper we focus on finite lattices and for them the interrelationships between classes we are interested in are shown in Figure \ref{fig:classes} (for details see \cite{Gan19} and \cite[Section 1.4]{S99}).

\begin{figure}[hbt]
  \begin{center}
    \includegraphics[width=0.5\linewidth]{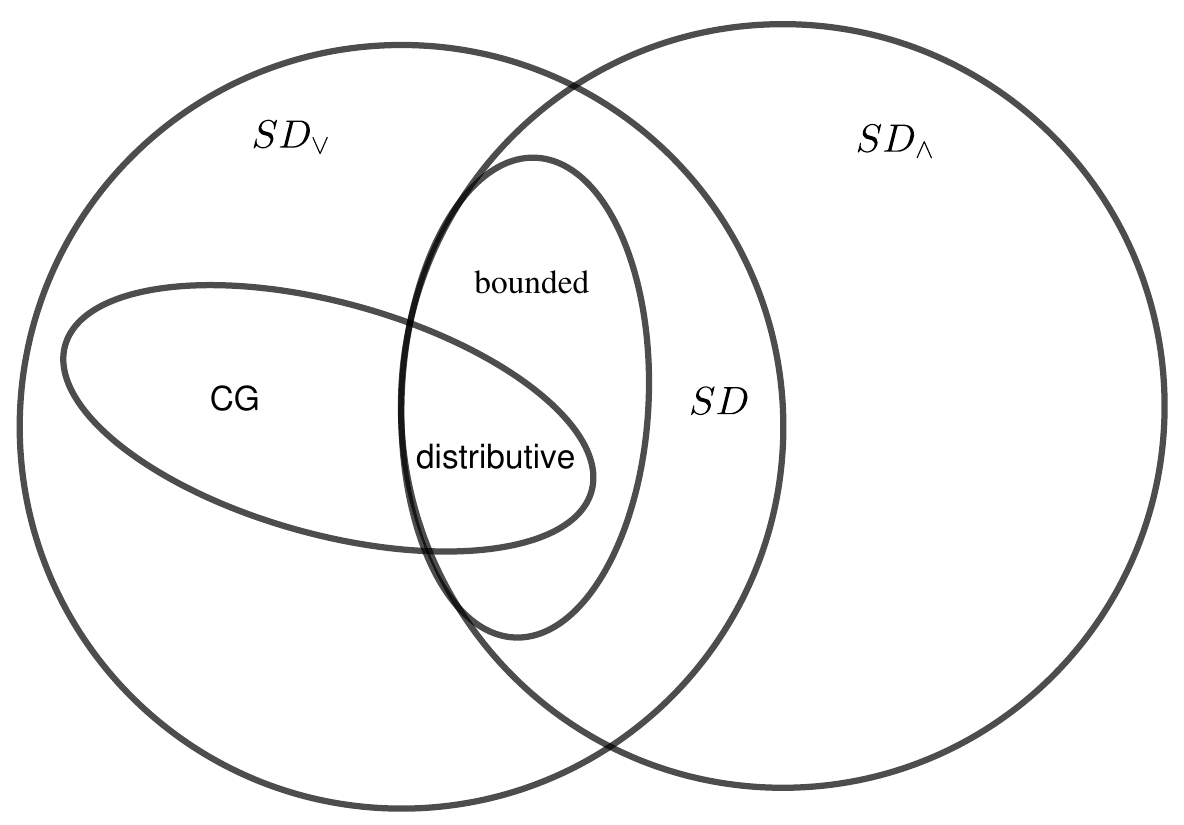}
    \caption{The inclusion relations between classes of lattices we focus on in this paper.}\label{fig:classes}
  \end{center}
\end{figure}

The class of distributive lattices is contained in the class of bounded lattices; by which we mean the bounded homomorphic images  of free lattices (for the precise definition see \cite[Chapter II]{FJN95} ).
The complements of maximal sublattices of bounded lattices were also shown to be intervals in  \cite[Theorem 7]{AFNS97}.

This suggests the following question.
\begin{question}\label{ques1}
For which classes of lattices are the complements of maximal sublattices intervals?
\end{question}

The class of semidistributive lattices is the extension of the class of bounded lattices, and
in \cite[page 116]{AFNS97}
the same question was asked of SD lattices.

In this paper we start a systematic study of this hypothesis:

\begin{hypothesis}\label{hyp1}
The complements of maximal sublattices of (finite) SD lattices are also intervals.
\end{hypothesis}

First, we will briefly recall some basic notions necessary to understand the formulation of problems which we investigate here.

A join-semidistributive (SD\begin{math}_{\vee}\end{math}) lattice is a lattice \begin{math}\left< \calL , \vee , \wedge\right>\end{math} in which the following quasi-identity holds:
\begin{displaymath}
x\vee y=x\vee z \implies x\vee (y\wedge z) =x\vee y
\end{displaymath}
for all \begin{math}x,y,z\in\calL\end{math}. Meet-semidistributive lattices (SD\begin{math}_\wedge\end{math}) are dually defined.

A lattice is said to be {\em semidistributive} (SD) if it is both join- and meet-semidistributive.

The  examples of \emph{join-semidistributive} lattices shown in Figure \ref{fig:maxnumbermaxelements}  demonstrate that complements of maximal sublattices  may have an arbitrary number of maximal elements.   All these examples come from a special subclass of join-semidsitributive lattices known as \emph{convex geometries} (CGs).

\begin{figure}[hbt]
  \begin{center}
    \includegraphics[width=0.7\linewidth]{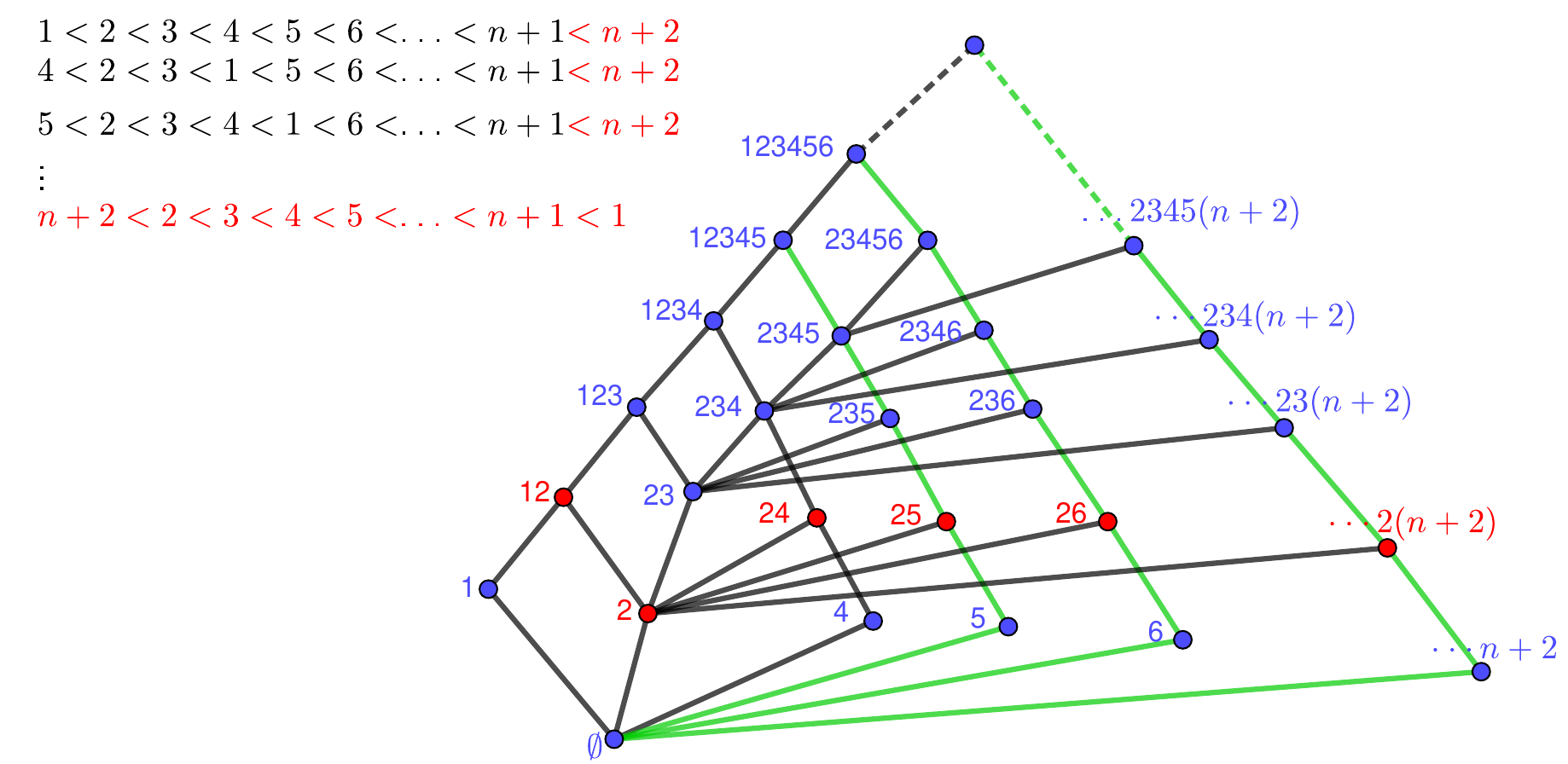}
     \caption{The figure above shows that in a CG with \begin{math}cdim=n\end{math}, which will also be SD\begin{math}_\vee\end{math}, there can be a complement of a maximal sublattice with \begin{math}n\end{math} distinct maximal elements. Here \begin{math}\left\{ 2, 12, 24, 25, 26, \ldots , 2(n+2)\right\} \end{math} is the complement of a maximal sublattice.}
    \label{fig:maxnumbermaxelements}
  \end{center}
\end{figure}

A {\em convex geometry} (CG) \begin{math}\left< \calG, \vee, \wedge\right>\end{math} is an  SD\begin{math}_\vee\end{math} lattice which is also  {\em lower semi-modular}:
\begin{equation*}
\forall\ x,y \in \calG\left( x \prec  x \vee y  \implies x \wedge y \prec y\right),
\end{equation*}
where \begin{math}a\prec b\end{math} means \begin{math}a\end{math} is a subcover of \begin{math}b\end{math}; that is, \begin{math}a<b\end{math} and there is no \begin{math}c\in\calG\end{math} for which \begin{math}a<c<b\end{math}.
Due to results of \cite{EJ85} a CG can be represented via intersections of down-sets of chains formed on the base set of a geometry, and hence the examples can be produced easily, unlike semidistributive lattices. Our main result, Theorem \ref{BigCGThm}, states that the complements of maximal sublattices of convex geometries generated by just two chains is either an interval or a union of two intervals with the common minimal element. In examples in Figure \ref{fig:maxnumbermaxelements},  the complements of maximal sublattices may include several intervals sharing the same minimal element.

This motivates the following hypothesis for the class of convex geometries and, more generally, join-semidistributive lattices.

\begin{hypothesis}\label{hyp2}
The complement of a maximal sublattice of a (finite) CG, or, more generally, lattices satisfying SD\begin{math}_{\vee}\end{math}, is the union of intervals with a common minimal element.
\end{hypothesis}

Immediately, one has also the dual formulation that the complement of a maximal sublattice of a (finite) SD\begin{math}_\wedge\end{math} lattice is the union of intervals with a common maximal element.

 If the Hypothesis \ref{hyp2} is true, then Hypothesis \ref{hyp1} follows.

If we do not restrict ourselves to SD\begin{math}_\vee\end{math} or SD\begin{math}_\wedge\end{math} lattices, we can have both more than one minimal and more than one maximal element in the complement of a maximal sublattice as Figure \ref{fig:fig10} shows.

\begin{figure}[hbt]
  \begin{center}
    \includegraphics[width=0.25\linewidth]{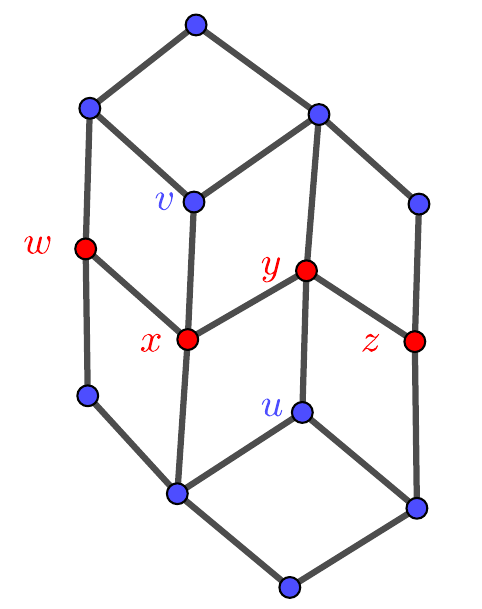}
     \caption{This lattice is an example of a lattice that is neither SD\begin{math}_\vee\end{math} nor SD\begin{math}_\wedge\end{math} that has a complement of a maximal sublattice with more than one minimum and more than one maximum. The set \begin{math}\{ w,x,y,z\}\end{math} is the complement of a maximal sublattice.  It is not SD\begin{math}_\vee\end{math} because \begin{math}y=u\vee x=u\vee z\end{math}, but \begin{math}u\vee (x\wedge z)=u\vee 0=u\neq u\vee x\end{math}. Similarly,  it is not SD\begin{math}_\wedge\end{math} because \begin{math}x=v\wedge w=v\wedge y\end{math}, but \begin{math}v\wedge (w\vee y)=v\wedge 1=v\neq v\wedge w\end{math}. }
    \label{fig:fig10}
  \end{center}
\end{figure}

CGs provide a combinatorial abstraction of convex sets in geometry, and their duals are known as anti-matroids \cite{AN16}. The antimatroids of convex geometries generated by 2 chains are also known as
\emph{slim planar semimodular} (SPS) lattices \cite{CK19}.

As we can see on Figure \ref{fig:classes}, the class of SD lattices contains the class of bounded lattices, which in turn contains the class of distributive lattices.   All CGs  which are also SD\begin{math}_\wedge\end{math}  are distributive, see, for example, \cite[Implication N98]{Gan19}, which is also an easy exercise.

Thus, CGs provide a very different generalization of distributivity compared to bounded lattices.
Here are some statistics on the number of convex geometries on 4 and 5 element due to \cite{5circles}: 15 distributive geometries out of 34 (44\%) and 49 distributive geometries among 672, thus, only about 7\%. Given these two observations, we would estimate a decreasing portion of distributive geometries among all CGs, when the number of elements in the base set increases.

Since the complements of the maximal sublattices in bounded lattices are intervals, they are, in particular, order convex: if \begin{math}a\leq d\leq b\end{math} and \begin{math}a,b \in \calC\end{math}, then \begin{math}d \in \calC\end{math}. Observation \ref{1maxORmin} (\ref{part36a}) and (\ref{part36b}) from Section \ref{observations} says that in certain situations the complement of a maximal sublattice of an arbitrary lattice is order convex.

Moreover, all of our examples of complements of maximal sublattices have been order convex. All these together suggest the following conjecture:

\begin{hypothesis}\label{hyp3}
In any finite lattice \begin{math}\calL\end{math} the complement \begin{math}\calC\end{math} of any maximal sublattice \begin{math}\calM\end{math} is order convex.
\end{hypothesis}

As noted in Observation \ref{maxmeetirreducible} below, any minimal element of the complement of a \begin{math}(0,1)-\end{math}sublattice of any finite lattice \begin{math}\calL\end{math} must be join-irreducible and any maximal element must be meet-irreducible. Hence the complement of any \begin{math}(0,1)-\end{math}sublattice of a given lattice must have at least one join-irreducible element, each minimum element, and at least one meet-irreducible element, each maximal element.

Recall that in a finite distributive lattice, every complement of a maximal sublattice is of the form \begin{math}[a,b]\end{math} where \begin{math}a\end{math} is the only join-irreducible element and \begin{math}b\end{math} is the only meet-irreducible one in the interval \begin{math}[a,b]\end{math} (\cite[Theorem 4]{Chen73} and \cite[Theorem 3]{R73}). The example in Figure \ref{daydoubling2}, using Alan Day's doubling construction to produce a bounded lattice from a distributive one \cite{Day1992} shows that this is not necessarily the case with bounded lattices. Figure \ref{fig:JNExample} shows another example, this one SD but not bounded.   However, none of our examples of complements of maximal sublattices of convex geometries have more than one join-irreducible, the unique minimum, and their only meet-irreducible elements are the maxima. This suggests the following question.

\begin{figure}[hbt]
  \begin{center}
    \includegraphics[width=0.5\linewidth]{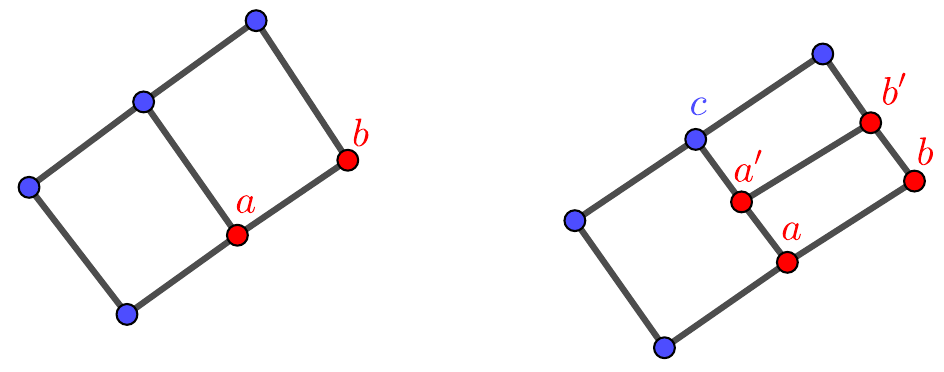}
     \caption{The lattice \begin{math}\calL\end{math} on the left is a distributive lattice and the lattice \begin{math}\calL'\end{math} to the right is an example of a lattice derived by doubling the interval \begin{math}[a,b]\end{math}. Day's 1992 paper \cite{Day1992} shows that bounded lattices are precisely those obtained by a sequence of interval doublings of this nature starting with a distributive lattice. Thus, \begin{math}\calL'\end{math} is bounded, but you can check that the interval \begin{math}[a,c]\end{math} is the complement of a maximal sublattice and both \begin{math}a\end{math} and \begin{math}a'\end{math} are join-irreducible.}\label{daydoubling2}
  \end{center}
\end{figure}

\begin{figure}[hbt]
  \begin{center}
    \includegraphics[width=0.5\linewidth]{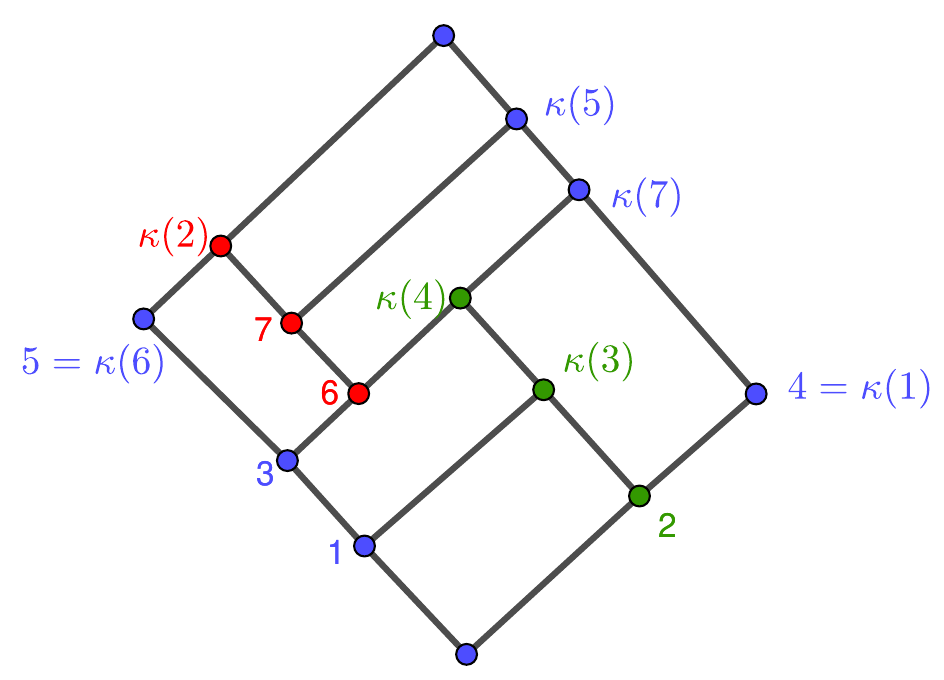}
     \caption{This lattice is an example, due to J\'onsson and Nation \cite[Figure 5.5]{FJN95} of an SD lattice that is not bounded. The elements of \begin{math}Ji(\calL)\end{math} are labeled with numbers and those in \begin{math}Mi(\calL)\end{math} are labeled as the image of the function \begin{math}\kappa\end{math}, Theorem \ref{Def12Thm}, showing that \begin{math}\kappa\end{math} is a bijection from \begin{math}Ji(\calL)\end{math} onto \begin{math}Mi(\calL)\end{math} and hence \begin{math}\calL\end{math} is SD. One can check that the interval \begin{math}[6,\kappa (2)]=\{ 6, 7, \kappa (2)\}\end{math} is a complement of a maximal sublattice with two join-irreducible elements and the interval \begin{math}[2,\kappa (4)]=\{ 2, \kappa (3), \kappa (4)\}\end{math} is a complement of a maximal sublattice with two meet-irreducible elements.}
    \label{fig:JNExample}
  \end{center}
\end{figure}

\begin{question}\label{ques2} Is it always the case for complements of maximal sublattices of convex geometries that the only join-irreducible element(s) is the minimum and the only meet-irreducible elements are the maxima?
\end{question}

We have confirmed in this paper (Section \ref{CG}) Hypotheses \ref{hyp2}, \ref{hyp3} and \ref{hyp4}, the last of which is formulated at the end of Section \ref{CG}, for CGs with \begin{math}cdim=2\end{math} and we have given a positive answer to Question \ref{ques2} for CGs with \begin{math}cdim=2\end{math}.

\subsection{Plan of the paper}\label{plan}  In Section \ref{preliminaries} we provide the necessary background for (finite) lattices in general, including specifics for the classes we are interested in: SD\begin{math}_\vee\end{math} lattices, SD\begin{math}_\wedge\end{math} lattices, SD lattices  and CGs.
Section \ref{observations} covers useful observations on complements of maximal sublattices in general case.

Section \ref{SDjoin}  discusses complements of maximal sublattices of  SD\begin{math}_\vee\end{math}  lattices. The main results of this section are Theorem \ref{greatestelementinC} and Corollary \ref{coat}, which describe particular situations when the complement of a maximal sublattice in SD\begin{math}_\vee\end{math} is an interval. The dual statements hold for SD\begin{math}_\wedge\end{math} lattices.  Section \ref{SD} covers  SD lattices. Theorem \ref{atomandcoatom} is a direct consequence of Theorem \ref{greatestelementinC} and Corollary \ref{coat} and their dual statements for SD\begin{math}_\wedge\end{math} lattices: if the complement of a maximal sublattice of an SD lattice has a unique maximal or unique minimal element or an atom or coatom, then it is an interval. The main result of the section is Theorem \ref{onecomparable}, which states that if the complement of a maximal sublattice of an SD lattice has one element that is comparable to every other element in the complement, then it is an interval.  Finally, in Section \ref{CG} we describe all complements of maximal sublattices of convex geometries with \begin{math}cdim=2\end{math} (Theorem \ref{BigCGThm}) and  discuss the computational complexity of the algorithm Theorem \ref{BigCGThm} suggests for finding these complements.

\section{Preliminaries}\label{preliminaries}

First we formally establish relevant definitions and known or easily proven facts. For the entirety of this paper we will work with finite lattices which necessarily have a least element and a greatest element, often referred to as 0 and 1, respectively, and in the case of convex geometries, as \begin{math}\emptyset\end{math} and \begin{math}X\end{math}, respectively.

\begin{defn}\label{manydefs} Let \begin{math}\calL\end{math} be a finite lattice.

\begin{enumerate}

\item  An {\em interval} in a lattice \begin{math}\calL\end{math} is a set of the form \begin{math}[a,b]=\{ c: a\leq c\leq b\}\subseteq\calL\end{math}, the {\em down set} of an element \begin{math}b\end{math} is the set \begin{math}\downarrow b=[0,b]\end{math} and the {\em up set} of an element \begin{math}b\end{math} is the set \begin{math}\uparrow b=[b,1]\end{math}.

\item  A subset \begin{math}S\end{math} of lattice \begin{math}\calL\end{math} is called  {\em order convex} if  \begin{math}a<b<c\end{math} and \begin{math}a,c\in  S\end{math} implies \begin{math}b\in S\end{math}; that is, for each \begin{math}a,c\in S\end{math}, \begin{math}[a,c]\subseteq  S\end{math}.\footnote{We note that this use of the phrase {\em order convex} is independent of whether we are working in a Convex Geometry or not.}  If lattice \begin{math}\calL\end{math} is finite, then every order convex sublattice \begin{math}\calS\end{math} is an interval \begin{math}[\bigwedge \calS,\bigvee \calS]\end{math}.

\item An element \begin{math}a\end{math} is a {\em subcover} of \begin{math}b\end{math} in a lattice \begin{math}\calL\end{math} if \begin{math}a<b\end{math} and there is no \begin{math}c \in \calL\end{math} such that \begin{math}a<c<b\end{math}. In this case we also refer to \begin{math}b\end{math} as a {\em cover} of \begin{math}a\end{math} and we write \begin{math}a\prec b\end{math} or \begin{math}b\succ a\end{math}. If \begin{math}a\prec 1\end{math}, then we refer to \begin{math}a\end{math} as a {\em coatom} of the lattice and if \begin{math}0\prec b\end{math} we refer to \begin{math}b\end{math} as an {\em atom} of the lattice.

\item A \begin{math}(0,1)-\end{math}{\em sublattice} of a lattice \begin{math}\calL\end{math} is a sublattice that contains the maximal element and the minimal element of \begin{math}\calL\end{math}.

\item A sublattice \begin{math}\calM\end{math} of a lattice \begin{math}\calL\end{math} is called a {\em maximal sublattice} if \begin{math}\calM\end{math} is a proper sublattice of \begin{math}\calL\end{math} (\begin{math}\calM\neq \calL\end{math}) and whenever \begin{math}\calS\end{math} is a proper sublattice of \begin{math}\calL\end{math} with \begin{math}\calM\subseteq \calS\end{math}, then \begin{math}\calM=\calS\end{math}.

\item\label{sublatticegeneratedby} Given a subset \begin{math}S\end{math} of a lattice \begin{math}\calL\end{math}, the intersection of all the sublattices of \begin{math}\calL\end{math} which contain \begin{math}S\end{math} (the smallest sublattice of \begin{math}\calL\end{math} that contains \begin{math}S\end{math}) is denoted \begin{math}\langle S\rangle\end{math} and is referred to as the {\em sublattice generated by} \begin{math}S\end{math}. If  \begin{math}a\in\calL\end{math}, when there is no ambiguity, we drop the curly brackets when referring to the singleton \begin{math}\{ a\}\end{math} and, for example, for \begin{math}S\subseteq\calL\end{math} and \begin{math}a\in\calL\end{math} we will usually refer to \begin{math}\left< S\cup \{ a\}\right>\end{math} as \begin{math}\left< S\cup a\right>\end{math} to avoid notational clutter. If \begin{math}\calS\end{math} is a maximal sublattice of \begin{math}\calL\end{math} and \begin{math}a\in\calL\setminus\calS\end{math}, then \begin{math}\left< \calS\cup a\right>=\calL\end{math}. This fact is often taken as the definition of a maximal sublattice. For any \begin{math}T\subseteq \calL\end{math} we define
\begin{equation*}
T^{\wedge}=\left\{ \left. \bigwedge_{i=1}^kt_i \ \right| \  k\in\mathbb{Z}^+\text{ and }t_i\in T\right\}, \quad T^{\vee}=\left\{\left. \bigvee_{i=1}^kt_i \ \right| \  k\in\mathbb{Z}^+\text{ and }t_i\in T\right\}.
\end{equation*}

For finite lattices \begin{math}\calL\end{math} we often build \begin{math}\left< S\right>\end{math} inductively in one of two ways:

\begin{enumerate}
    \item  \begin{math}\left< S\right>\end{math} is the limit of the increasing sequence of sets \begin{math}S_n\end{math}, where \begin{math}S_0=S\end{math}, and for \begin{math}n\geq 0\end{math}, \begin{math}S_{n+1}=S_n^{\vee}\cup S_n^{\wedge}\end{math}.

\item    \begin{math}\left< S\right>\end{math} is the limit of the increasing sequence of sets
\begin{equation*}
S_1=S^{\wedge\vee}, \quad S_2=S^{\wedge\vee\wedge\vee}=S_1^{\wedge\vee} , \quad S_3=S^{\wedge\vee\wedge\vee\wedge\vee}=S_2^{\wedge\vee}, \quad \ldots
\end{equation*}

\end{enumerate}

 \item An element \begin{math}j\end{math} of a lattice \begin{math}\calL\end{math} is called {\em join irreducible} if
 \begin{math}j\not = 0\end{math} and \begin{math}x\vee y=j\end{math} implies \begin{math}x=j\end{math} or \begin{math}y=j\end{math}.
 A {\em meet irreducible} element is defined dually.
 An element that is both join and meet irreducible is called {\em doubly irreducible}. The set of join irreducible elements in a lattice \begin{math}\calL\end{math} is denoted \begin{math}Ji(\calL)\end{math}, and the set of meet irreducible elements \begin{math}Mi(\calL)\end{math}.

 Every join irreducible element \begin{math}j\end{math} has a unique subcover, which we will refer to as \begin{math}j_*\end{math} and every meet irreducible element \begin{math}m\end{math} has a unique cover, which we will refer to as \begin{math}m^*\end{math}.

Given any element \begin{math}a\end{math} of  a lattice \begin{math}\calL\end{math}, \begin{math}\bigwedge\{ x\in Mi(\calL): x\geq a\} = a = \bigvee \{y\in Ji(\calL ): y\leq a\}\end{math}, where we define \begin{math}1=\bigwedge \emptyset\end{math} and \begin{math}0=\bigvee \emptyset\end{math}.

 \item A lattice \begin{math}\calL\end{math} is {\em distributive} if for every \begin{math}x,y,z\in\calL\end{math}, \begin{math}x\vee (y\wedge z) = (x\vee y)\wedge (x\vee z)\end{math}. This identity implies the dual one.

\item A lattice \begin{math}\calL\end{math} is {\em join-semidistributive} (SD\begin{math}_{\vee}\end{math})  if   \begin{math}x\vee y = x\vee z\end{math} implies \begin{math}x\vee (y\wedge z) = x\vee y\end{math}.
{\em Meet-semidistributive} lattices are defined dually.

  A lattice is called {\em semidistributive} if it is both meet- and join-semidistributive. We will refer to join-semidistributive lattices as SD\begin{math}_\vee\end{math} lattices, meet-semidistributive lattices as SD\begin{math}_\wedge\end{math} lattices, and semidistributive lattices as SD lattices.

\item\label{def:waybelow} If \begin{math}X,Y\end{math} are two subsets of lattice \begin{math}\calL\end{math}, then \begin{math}X\end{math} is \emph{way-below} \begin{math}Y\end{math}, written \begin{math}X\ll Y\end{math}, if for every \begin{math}x \in X\end{math} there exists \begin{math}y\in Y\end{math} such that \begin{math}x\leq y\end{math}.

\item\label{def:cjmj} An element \begin{math}x \in \calL\end{math} has the \emph{canonical join representation} \begin{math}x=\bigvee X\end{math}, if, for any other join representation \begin{math}x=\bigvee Y\end{math}, one has \begin{math}X\ll Y\end{math}; or equivalently,  \begin{math}Y\ll X\end{math} implies \begin{math}X\subseteq Y\end{math}. In particular, the canonical representation is irreducible: any proper subset \begin{math}Y\subset X\end{math} cannot give \begin{math}\bigvee Y =x\end{math}. Indeed, any subset \begin{math}Y\subseteq X\end{math} satisfies \begin{math}Y\ll X\end{math}, therefore, \begin{math}x=\bigvee Y\end{math} would imply \begin{math}X\subseteq Y\end{math}, or \begin{math}Y=X\end{math}. A canonical meet representation is defined  dually.
All elements in the canonical join representation are join irreducible and those in the canonical meet representation are meet irreducible. We will refer to an element of the canonical join representation of an element \begin{math}x\in\calL\end{math} as a  {\em canonical  joinand} (or {\em cj}) of \begin{math}x\end{math}, and to an element in the canonical meet representation of \begin{math}x\end{math}   as a  {\em canonical meetand} (or {\em cm}) of \begin{math}x\end{math}. (\cite[page 47]{FJN95})

\item Given a finite lattice  \begin{math}\calL\end{math}, \begin{math}j \in Ji(\calL)\end{math}, and \begin{math}m\in Mi(\calL)\end{math}. Let \begin{math}j_*\end{math} be the subcover of \begin{math}j\end{math} and \begin{math}m^*\end{math} be the  cover of \begin{math}m\end{math}. Denote \begin{math}K(j)=\{u \in \calL: u\geq j_*, u\not \geq j\}\end{math} and \begin{math}K^\sigma (m) =\{ v\in \calL: v\leq m^*, v\nleq m\}\end{math}.

Below we list several relevant theorems from \cite{FJN95}. We provide the proof for item (\emph{d}) that can be deduced from other statements.

\begin{thm}\label{Def12Thm} \

\begin{enumerate}

\item\label{cjr} \cite[Theorem 2.24]{FJN95}
 A finite lattice \begin{math}\calL\end{math} satisfies SD\begin{math}_\vee\end{math} if and only if every element of \begin{math}\calL\end{math} has a canonical join representation. Dually, it satisfies SD\begin{math}_\wedge\end{math} if and only if every element has a canonical meet representation.

\item\label{kappa}  \cite[Theorem 2.56]{FJN95} A finite lattice
\begin{math}\calL\end{math} satisfies SD\begin{math}_\wedge\end{math} if and only if for every \begin{math}j\in Ji(\calL)\end{math}, \begin{math}K(j)\end{math} has a greatest element,  \begin{math}\kappa (j)\end{math}, which is in \begin{math}Mi(\calL)\end{math}. Dually, \begin{math}\calL\end{math} satisfies SD\begin{math}_\vee\end{math} if and only if for every \begin{math}m\in Mi(\calL)\end{math}, \begin{math}K^\sigma (m)\end{math} has a least element which we denote \begin{math}\kappa^\sigma (m)\end{math}.

\item\cite[Theorem 2.56]{FJN95}
If \begin{math}\calL\end{math} is SD\begin{math}_\wedge\end{math}, then the mapping \begin{math}\kappa: Ji(\calL) \rightarrow Mi(\calL)\end{math} that is defined due to Theorem \ref{Def12Thm} (b), is onto. Dually, if \begin{math}\calL\end{math} is SD\begin{math}_\vee\end{math} then the mapping \begin{math}\kappa^\sigma: Mi(\calL)\rightarrow Ji(\calL)\end{math} is onto.  If \begin{math}\calL\end{math} is SD, then \begin{math}\kappa\end{math} and \begin{math}\kappa^\sigma\end{math} are inverses of each other and thus both are bijective.

\item\label{SDTypeThms} \cite{A16,FJN95}
Let finite \begin{math}\calL\end{math} satisfy SD\begin{math}_\wedge\end{math} or SD\begin{math}_\vee\end{math}. Then \begin{math}|Ji(\calL)|=|Mi(\calL)|\end{math} if and only if \begin{math}\calL\end{math} satisfies both SD\begin{math}_\wedge\end{math} and SD\begin{math}_\vee\end{math}; that is, if and only if \begin{math}\calL\end{math} is SD.
 In this case,   \begin{math}\kappa(j)=m\end{math} if and only if \begin{math}\kappa^\sigma(m  )=j\end{math}, and \begin{math}j\vee m = m^*\end{math}, \begin{math}j\wedge m=j_*\end{math}.

\end{enumerate}
\end{thm}

\begin{proof} (of \ref{Def12Thm} (d))
If \begin{math}\calL\end{math} is semidistributive, then \begin{math}|Ji(\calL)|=|Mi(\calL)|\end{math} follows from \cite[Corollary 2.55]{FJN95}.

  Suppose we start with an \begin{math}SD_\wedge\end{math} lattice for which \begin{math}|Ji(\calL)|=|Mi(\calL)|\end{math}.
Then for every \begin{math}a \in Ji(\calL)\end{math} there must be \begin{math}\kappa(a) \in Mi(\calL)\end{math}, by \cite[Theorem 2.56]{FJN95}. By part (b) \begin{math}\kappa\end{math} is onto and hence since \begin{math}|Ji(\calL)|=|Mi(\calL)|<\infty\end{math}, \begin{math}\kappa\end{math} is a bijection. Fix \begin{math}c\in Mi(\calL)\end{math}. Let \begin{math}b\end{math} be a minimal element in \begin{math}K^{\sigma}(c)\end{math}. Suppose that \begin{math}b\notin Ji(\calL)\end{math}, i.e. \begin{math}b=x\vee y\end{math}, for \begin{math}x,y\neq b\end{math}. Then, since \begin{math}b\end{math} is minimal in \begin{math}K^{\sigma}(c)\end{math}, neither \begin{math}x\end{math} nor \begin{math}y\end{math} belongs to this set, hence \begin{math}x,y\leq c\end{math} which gives that \begin{math}b\leq c\end{math}, a contradiction. By the same argument, \begin{math}b_*\leq c\end{math} and \begin{math}c\in K(b)\end{math}. Suppose now that \begin{math}c\neq \kappa(b)\end{math}. Then \begin{math}c< \kappa(b)\end{math} which implies \begin{math}c^* \leq \kappa(b)\end{math}, i.e. \begin{math}b\leq \kappa(b)\end{math},
a contradiction. Due to injectivity of the mapping \begin{math}\kappa\end{math}, there is no more than one minimal element in \begin{math}K^\sigma(c)\end{math}, and \begin{math}\kappa^\sigma(c)\end{math} exists. This would imply \begin{math}SD_\vee\end{math} by the dual version of \cite[Theorem 2.56]{FJN95}.
\end{proof}

\item A lattice is {\em  bounded} if it is the image of a bounded homomorphism from a free lattice, see \cite[Chapter II]{FJN95}.

\item\label{CGdef} Although there are many equivalent definitions for {\em convex geometry}, see \cite{AN16}, in this paper we use the following formulation of the definition of a finite  convex geometry.

We start with a finite set \begin{math}X = \{ 1,\ldots , m\}\end{math}, which we will refer to as the {\em base set}, and \begin{math}n\end{math} linear orders on \begin{math}X\end{math} given by \begin{math}n\end{math} permutations \begin{math}\phi_n\end{math} of \begin{math}X\end{math}.  For each linear order \begin{math}<_i\end{math} given by \begin{math}\phi_i(1) <_i\phi_i(2) <_i\cdots <_i \phi_i(m)\end{math}, we define  \begin{math}C_i\end{math} to be the following increasing chain of subsets of \begin{math}X\end{math}:

\begin{equation*}
\emptyset\subseteq \{ \phi_i(1) \} \subseteq \{ \phi_i(1),\phi_i(2)\}  \subseteq \cdots \subseteq \{ \phi_i(1),\ldots , \phi_i(m) \} \ = \ X
\end{equation*}

 It is the case that the set \begin{math}\calG\end{math} of all intersections of sets in the \begin{math}C_i\end{math}, \begin{math}i=1,\ldots , n\end{math}, ordered by set inclusion, is a lattice and we call this lattice the {\em convex geometry} generated by chains \begin{math}C_1\end{math}, \begin{math}C_2\end{math}, \begin{math}\ldots\end{math}, \begin{math}C_n\end{math}. We call the chains \begin{math}C_i\end{math} the {\em generating chains} of \begin{math}\calG\end{math}. In this lattice, \begin{math}a\wedge b=a\cap b\end{math} but we can only generally say \begin{math}a\vee b\supseteq a\cup b\end{math}. Indeed, \begin{math}a\vee b=\bigcap_{a\cup b\subseteq c}c\end{math}. The \begin{math}cdim\end{math} of \begin{math}\calG\end{math} is the minimum number of generating chains needed to generate all the sets in \begin{math}\calG\end{math}, which is also the maximal antichain of meet irreducible elements. In this paper we primarily work with convex geometries with \begin{math}cdim=2\end{math}.  We will sometimes refer to \begin{math}\calG\end{math} as \begin{math}\left< X, \calG \right>\end{math} if \begin{math}X\end{math} is relevant to the discussion.

If \begin{math}\left<X, \calG \right>\end{math} is a convex geometry generated by chains \begin{math}C_1\end{math}, \begin{math}\ldots\end{math}, \begin{math}C_n\end{math}, and \begin{math}C_i\end{math} is given by the order
\begin{math}\phi_i(1) <_i\phi_i(2)<_i\cdots <_i \phi_i(m)\end{math}, then we will refer to the set \begin{math}\{ \phi_i(1), \ldots , \phi_i(k)\}\end{math} as \begin{math}C_i(\phi_i(k))\end{math}.  It is the first element in chain \begin{math}C_i\end{math} that contains  \begin{math}\phi_i(k)\end{math}.   We will refer to the least meet irreducible element of \begin{math}\calG\end{math} that lies on chain \begin{math}C_i\end{math} and contains point \begin{math}x\in X\end{math} as \begin{math}M_i(x)\end{math}. As every element of \begin{math}\calG\end{math} is the meet of elements from the generating chains, for each \begin{math}a\in\calG\end{math} and for each \begin{math}i=1,\ldots ,n\end{math}, there is a least element \begin{math}C_i(a)\end{math} for which \begin{math}a\subseteq C_i(a)\end{math}. Clearly, if \begin{math}a=\{ x\}\end{math} is a singleton, then \begin{math}C_i(a)=C_i(x)\end{math}.

There are a few facts about convex geometries (see \cite{AN16}) that we will be using. Suppose \begin{math}\left< X,\calG\right>\end{math} is a convex geometry with \begin{math}cdim=n\end{math} and generating chains \begin{math}C_1\end{math}, \begin{math}\ldots\end{math}, \begin{math}C_n\end{math}.

\begin{enumerate}
\item  \begin{math}\calG\end{math} is SD\begin{math}_\vee\end{math}.
\item  \begin{math}\calG\end{math} is {\em lower semi-modular}, which means for all \begin{math}x,y\in\calG\end{math}, \begin{math}x\prec x\vee y\Rightarrow x\wedge y\prec y\end{math}; or, equivalently, if \begin{math}x\prec y\end{math}, then for every \begin{math}z\end{math}, either \begin{math}x\wedge z=y\wedge z\end{math} or \begin{math}x\wedge z\prec y\wedge z\end{math}.
\item  If \begin{math}x\prec y\end{math} in \begin{math}\calG\end{math}, then as sets,  \begin{math}|y\setminus x|=1\end{math}.
\item All meet-irreducible elements of \begin{math}\calG\end{math} are on generating chains of \begin{math}\calG\end{math}.
\item For all \begin{math}a\in\calG\end{math}, \begin{math}a=\bigwedge_{i=1}^nC_i(a)=\bigcap_{i=1}^nC_i(a)\end{math}.
\end{enumerate}

\item As discussed in \cite[page 8]{G2},
the {\em glued  sum} of lattices \begin{math}\calL_1\end{math} and \begin{math}\calL_2\end{math} is the lattice \begin{math}\calL\end{math} formed by equating the maximal element of \begin{math}\calL_1\end{math} with the minimal element of \begin{math}\calL_2\end{math}, considering every element
in \begin{math}\calL_1\end{math} to be less than every point in \begin{math}\calL_2\end{math}, and keeping the order on each \begin{math}\calL_i\end{math} the same. This point of ``gluing" will be related to every element of \begin{math}\calL\end{math}. Furthermore, every lattice that has an element, other than its maximal and minimal element, that is comparable
to every other element of the lattice, can be {\em decomposed} into smaller components that are glued-summed together at that point. We  refer  to a lattice as {\em indecomposable} if there is no element other than the least and greatest element that is comparable to every other element of the lattice. Every lattice can be decomposed into a sequence of indecomposable components.

\end{enumerate}
\end{defn}

\section{General Observations}\label{observations}

In this section we note observations about complements of maximal sublattices in general, independent of restrictions we have on the initial lattice \begin{math}\calL\end{math}. Some of these observations give partial answers to \cite[Question 3]{Sch99}:
\begin{question}
Suppose \begin{math}P\end{math} is a finite ordered set occurring as a suborder of a finite lattice. Is there a finite lattice \begin{math}\calL\end{math} and a maximal sublattice \begin{math}\calM\end{math} of \begin{math}\calL\end{math} such that \begin{math}P\cong \calL\setminus \calM\end{math}? How can such ordered sets be characterized?
\end{question}

\begin{obs}\label{obs0} The complement of a maximal sublattice of any lattice will be contained entirely within an indecomposable component of that lattice. In particular, if \begin{math}x\end{math} is comparable to every element of \begin{math}\calL\end{math} and \begin{math}\calS\end{math}  is a  sublattice of \begin{math}\calL\end{math}, then both \begin{math}\calS\cup \uparrow x\end{math} and \begin{math}\calS\cup \downarrow x\end{math} are sublattices of \begin{math}\calL\end{math} and hence if \begin{math}\calS\end{math} is maximal, its complement must be entirely contained in either \begin{math}\uparrow x\end{math} or \begin{math}\downarrow x\end{math}.
\end{obs}

\begin{proof}
  Let \begin{math}\calS\end{math} be a sublattice of \begin{math}\calL\end{math}. We show that \begin{math}\calS\cup\downarrow x\end{math} is a sublattice of \begin{math}\calL\end{math}.  Fix \begin{math}u,v\in\calS\cup\downarrow x\end{math}.  If both \begin{math}u\end{math} and \begin{math}v\end{math} are in \begin{math}\calS\end{math}, then \begin{math}u\vee v\in\calS\end{math} and \begin{math}u\wedge v\in\calS\end{math}. If both \begin{math}u\end{math} and \begin{math}v\end{math} are in \begin{math}\downarrow x\end{math}, then \begin{math}u\vee v\in\downarrow x\end{math} and \begin{math}u\wedge v\in\downarrow x\end{math}. Suppose \begin{math}u\in \calS\setminus \downarrow x\end{math} and \begin{math}v\in \downarrow x\end{math}. Then \begin{math}u\in\uparrow x\end{math} since \begin{math}x\end{math} is comparable to everything and so \begin{math}v\leq x\leq u\end{math}.  That is, \begin{math}u\wedge v=v\in\downarrow x\end{math} and \begin{math}u\vee v=u\in\calS\end{math}.  In all cases, \begin{math}u\vee v\in\calS\cup \uparrow x\end{math} and \begin{math}u\wedge v\in\calS\cup\uparrow x\end{math} and thus \begin{math}\calS\cup \uparrow x\end{math} is a sublattice of \begin{math}\calL\end{math}.  An analogous argument shows that \begin{math}\calS\cup \uparrow x\end{math} is a sublattice of \begin{math}\calL\end{math}.
\end{proof}

\begin{convention} It follows from Observation \ref{obs0} that if the lattice has a unique atom then the complement of any maximal sublattice lies entirely in the up set of that atom and if it has a unique coatom, then the complement lies entirely within the  down set of that coatom.
Thus, going forward we assume all the lattices we are working with have at least two atoms and at least two coatoms.
\end{convention}

\begin{obs}\label{irreducible}
 If \begin{math}\calM\end{math} is a maximal sublattice of lattice \begin{math}\calL\end{math} whose complement \begin{math}\calC=\calL\setminus\calM\end{math} has more than one element, then no element of the complement is doubly irreducible for if \begin{math}a\in\calL\end{math} is doubly irreducible, \begin{math}\{ a\}\end{math} is itself the complement of a sublattice (and hence of a maximal sublattice).
\end{obs}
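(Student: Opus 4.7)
The plan is to prove the contrapositive: if some element $a \in \calC$ is doubly irreducible (with $|\calC| \geq 2$), I will construct a proper sublattice strictly between $\calM$ and $\calL$, contradicting the maximality of $\calM$. The natural candidate is $\calL \setminus \{a\}$ itself, exactly as the parenthetical hint in the statement suggests.

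First I would verify that $\calL \setminus \{a\}$ is a sublattice whenever $a$ is doubly irreducible. Fix $x, y \in \calL \setminus \{a\}$ and assume for contradiction that $x \vee y = a$. Since $a$ is join-irreducible, this forces $x = a$ or $y = a$, contradicting the choice of $x, y$. The dual argument using meet-irreducibility of $a$ rules out $x \wedge y = a$. Therefore $x \vee y$ and $x \wedge y$ both lie in $\calL \setminus \{a\}$, so closure holds.

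Next I would assemble the contradiction. Since $a \in \calC = \calL \setminus \calM$, we have $\calM \subseteq \calL \setminus \{a\}$. Because $|\calC| \geq 2$, there is some $b \in \calC$ with $b \neq a$, and $b \in (\calL \setminus \{a\}) \setminus \calM$, so the containment $\calM \subsetneq \calL \setminus \{a\}$ is strict. Also $\calL \setminus \{a\} \subsetneq \calL$ since $a \in \calL$. Thus $\calL \setminus \{a\}$ is a proper sublattice of $\calL$ strictly containing $\calM$, contradicting the maximality of $\calM$.

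I do not anticipate a genuine obstacle here: the argument is purely a matter of unpacking the definitions of join-irreducible, meet-irreducible, and maximal sublattice. The only subtlety worth flagging explicitly is the hypothesis $|\calC| \geq 2$, which is used exactly once, to ensure that removing $a$ still leaves something outside $\calM$; without it, the singleton $\{a\}$ really can be the complement of a maximal sublattice, as the statement's hypothesis correctly allows.
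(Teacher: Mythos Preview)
Your proposal is correct and follows exactly the approach indicated in the paper: the paper does not give a separate proof of this observation, since the justification is already embedded in the statement itself (the parenthetical remark that $\calL\setminus\{a\}$ is a sublattice whenever $a$ is doubly irreducible). Your write-up simply unpacks that remark carefully, including the role of the hypothesis $|\calC|\geq 2$, and there is nothing to add.
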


\begin{obs}\label{maxmeetirreducible}
 If \begin{math}\calS\end{math} is a \begin{math}(0,1)-\end{math}sublattice of lattice \begin{math}\calL\end{math} and \begin{math}a\end{math} is a maximal element of its complement \begin{math}\calC=\calL\setminus\calS\end{math}, then \begin{math}a\end{math} is meet-irreducible. Similarly, if \begin{math}c\end{math} is a minimal element of \begin{math}\calC\end{math}, then \begin{math}c\end{math} is join-irreducible. Thus, if \begin{math}\calC\end{math} has more than one element, then by Observation \ref{irreducible}, no maximal element can be join-irreducible and no minimal element can be meet-irreducible.
\end{obs}

Below \cite[Question 3]{Sch99} there is a comment in the text that Gabriela Bordalo, in a private communication, pointed out that in a complement with at least 2 elements there is no isolated point. There is also a proof (explanation) why it holds. This remark is a special case of our Observation \ref{Cconnected} with one of the components being a singleton. But such an isolated point in a poset is both a maximal and a minimal element in this poset, so it is doubly irreducible and such a situation is already eliminated by Observations \ref{irreducible} and \ref{maxmeetirreducible}.

\begin{obs}\label{Cconnected} If \begin{math}\calC\end{math} is the complement of a maximal sublattice \begin{math}\calM\end{math} of some lattice \begin{math}\calL\end{math}, then \begin{math}\calC\end{math} cannot be partitioned into two nonempty sets \begin{math}C_1\end{math} and \begin{math}C_2\end{math} for which no element of \begin{math}C_1\end{math} is comparable to any element of \begin{math}C_2\end{math}.
  \end{obs}

\begin{proof} Suppose \begin{math}\calM\end{math} is a maximal sublattice of \begin{math}\calL\end{math}, \begin{math}\calC=\calL\setminus\calM\end{math}, and  \begin{math}\calC=C_1\cup C_2\end{math} where no element in \begin{math}C_1\end{math} is comparable to any element of \begin{math}C_2\end{math} and both \begin{math}C_1\end{math} and \begin{math}C_2\end{math} are nonempty. Fix \begin{math}x,y\in \calM\cup C_2\end{math}.  If both \begin{math}x,y\in \calM\end{math}, then necessarily \begin{math}x\wedge y,x\vee y\in\calM\end{math}.  If either \begin{math}x\end{math} or \begin{math}y\end{math} is in \begin{math}C_2\end{math}, then \begin{math}x\wedge y,x\vee y\notin C_1\end{math} since no element in \begin{math}C_2\end{math} is comparable to any element of \begin{math}C_1\end{math}, and thus in either case, \begin{math}x\wedge y,x\vee y\in \calM\cup C_2\end{math}.  That is, \begin{math}\calM\cup C_2\end{math} is a sublattice of \begin{math}\calL\end{math} which contains \begin{math}\calM\end{math}, contradicting the fact that \begin{math}\calM\end{math} is maximal.
\end{proof}

The proof of Observation \ref{obs1} is straightforward.

\begin{obs}\label{obs1} If \begin{math}\calM\end{math} is a maximal sublattice of some lattice \begin{math}\calL\end{math} with at least two atoms then \begin{math}0\in \calM\end{math}. Dually, if \begin{math}\calM\end{math} is a maximal sublattice of some lattice with at least two coatoms then \begin{math}1\in \calM\end{math}.
\end{obs}

It follows, by Observation \ref{obs1}, that in lattices we are interested in our investigations, described above in the  Convention, maximal sublattices are always \begin{math}(0,1)\end{math}-sublattices. It also implies that given any element \begin{math}c\end{math} in the complement \begin{math}\calC\end{math} of a maximal sublattice \begin{math}\calM\end{math}, the sets \begin{math}\{ m: m\in\calM \text{ and }m>c\}\end{math} and  \begin{math}\{ m: m\in\calM \text{ and }m<c\}\end{math} are both nonempty, which allows us the following notation.

\begin{notation}
Let \begin{math}\calM\end{math} be a maximal sublattice of a finite lattice \begin{math}\calL\end{math} with complement \begin{math}\calC=\calL\setminus\calM\end{math}. For each \begin{math}c\in\calC\end{math} we will let
\begin{equation*}
\overline{m}(c) = \bigwedge\{ m: m\in\calM \text{ and }m>c\} \quad \text{and} \quad \underbar{m}(c) = \bigvee\{ m: m\in\calM \text{ and }m<c\}
\end{equation*}
\end{notation}

Recall the definitions of \begin{math}\left< S\right>\end{math}, \begin{math}S^\wedge\end{math} and \begin{math}S^\vee\end{math} from Definition \ref{manydefs} (\ref{sublatticegeneratedby}).
   \begin{obs}\label{1maxORmin}
   Suppose \begin{math}\calM\leq \calL\end{math} is a maximal sublattice with complement \begin{math}\calC=\calL\setminus\calM\end{math}.

   \begin{enumerate}
   \item\label{part36a} If \begin{math}\calC\end{math} has a unique minimal element \begin{math}c_0\end{math} and \begin{math}m_0=\overline{m}(c_0)\end{math}, then \begin{math}\calC\cap [m_0,1]=\emptyset\end{math} and \begin{math}\calC\end{math} is order convex. The dual statement holds for a unique maximal element in \begin{math}\calC\end{math}.
   \item\label{part36b} Let \begin{math}c_1,\dots, c_k\end{math} all be   minimal elements of \begin{math}\calC\end{math}, \begin{math}m_i=\overline{m}(c_i)\end{math}, \begin{math}i\leq k\end{math}, and \begin{math}m_0=\bigwedge m_i\end{math}. Then either \begin{math}\calC \subseteq [m_0,1]\end{math} or \begin{math}\calC\cap [m_0,1]=\emptyset\end{math}. In particular, in the second case, \begin{math}\calC\end{math} is order convex. The dual case holds for maximal elements of \begin{math}\calC\end{math}.
   \item\label{part36c} In the setting of (b), if \begin{math}\calC \subseteq [m_0,1]\end{math}, then \begin{math}\calC\end{math} is a complement of maximal sublattice in \begin{math}[m_0,1]\end{math}.
   \end{enumerate}
   \end{obs}

   \begin{note} Part (\ref{part36c}) tells us that given the setting of (\ref{part36b}), we can restrict ourselves to the case that either \begin{math}m_0=0\end{math} or  \begin{math}\calC\cap [m_0,1]=\emptyset\end{math}.
   \end{note}

   \begin{proof} \

   \begin{enumerate}
\item[(\ref{part36a})] This is a special case of item (b), which we prove more generally below.

\item[(\ref{part36b})]    Suppose \begin{math}c\in\calC\cap [m_0,1]\end{math}. Our goal is to show that given any \begin{math}m\in \calM\end{math}, \begin{math}c\wedge m\end{math} and \begin{math}c\vee m\end{math} are in \begin{math}\calC\cap [m_0,1]\end{math}. Fix \begin{math}m\in\calM\end{math}.  Notice that \begin{math}c\vee m \in [m_0,1]\end{math} since \begin{math}c>m_0\end{math} and if \begin{math}m\in [m_0,1]\end{math}, then \begin{math}m\wedge c\in [m_0,1]\end{math} since both \begin{math}c\end{math} and \begin{math}m\end{math} are bigger than or equal to \begin{math}m_0\end{math}. Suppose now \begin{math}m\ngeq m_0\end{math}. Then \begin{math}m\ngeq m_i\end{math}, and so  \begin{math}m\ngeq c_i\end{math}  for any \begin{math}i=1,\ldots , k\end{math}. But that means \begin{math}m\wedge c\ngeq c_i\end{math} for any \begin{math}i=1,\ldots , k\end{math} and hence \begin{math}c\wedge m\notin\calC\end{math}. This means, \begin{math}\left< \calM, c\right>\subseteq \calM\cup [m_0,1]\end{math}. But since \begin{math}\calM\end{math} is maximal, \begin{math}\calL =\left< \calM, c\right>\end{math} and so \begin{math}\calC\subseteq [m_0,1]\end{math}. Thus, if \begin{math}\calC\cap [m_0,1]\neq\emptyset\end{math} then \begin{math}\calC\subseteq [m_0,1]\end{math}.

Suppose \begin{math}\calC\cap [m_0,1]=\emptyset\end{math}.  Fix \begin{math}c,c'\in\calC\end{math} with \begin{math}c<c'\end{math} and suppose \begin{math}c<b<c'\end{math}.  We know \begin{math}c\geq c_i\end{math} for some \begin{math}i\end{math} and so \begin{math}b>c_i\end{math}, but also, \begin{math}b\ngeq m_0\end{math} since \begin{math}c\ngeq m_0\end{math}.  That is, \begin{math}b\ngeq m_i\end{math} and so \begin{math}b\in\calC\end{math}. That is, \begin{math}\calC\end{math} is order convex.

\item[(\ref{part36c})] Indeed, take \begin{math}\calM^*=\calM\cap [m_0,1]\end{math}. Then     \begin{math}\calC\end{math} is a complement of sublattice \begin{math}\calM^*\end{math} in \begin{math}[m_0,1]\end{math}. We need to show that for any \begin{math}c\in\calC\end{math}, \begin{math}[m_0,1] =\left< \calM^*,c\right>\end{math}. Clearly for any \begin{math}c\in\calC\end{math},  \begin{math}\left< \calM^*,c\right>\subseteq [m_0,1]=\left< \calM , c\right> \cap [m_0,1]\end{math} since \begin{math}\left< \calM, c\right>\end{math} is the entire lattice.

Fix \begin{math}c\in   \calC\end{math}.   Consider \begin{math}\calM_0=\calM\cup c\end{math},
\begin{math}\calM_{n+1}=\calM_n^\vee\cup \calM_n^\wedge\end{math}, \begin{math}\calM_0^*=\calM^*\cup c\end{math}, and \begin{math}\calM_{n+1}^*=\left( \calM_n^*\right)^\vee\cup \left(\calM_n^*\right)^\wedge\end{math} for each \begin{math}n>0\end{math}. Our goal is to show \begin{math}\calM_n\cap [m_0,1]\subseteq\calM_n^*\end{math}. Clearly, \begin{math}\calM_0\cap [m_0,1]\subseteq \calM_0^*\end{math}. Suppose we know that up to a given \begin{math}n\end{math}, \begin{math}\calM_n\cap [m_0,1]\subseteq  \calM_n^* \end{math},
 and choose \begin{math}c'\in \calM_{n+1}\cap [m_0,1]\end{math}. Then there is some \begin{math}c''\in\calM_n\end{math} and some \begin{math}m\in\calM\end{math}, such that either  \begin{math}c'=c''\wedge m\end{math}   or \begin{math}c'=c''\vee m\end{math}. Suppose \begin{math}c'=c''\wedge m\end{math}, then both \begin{math}c''\end{math} and \begin{math}m\end{math} are greater than or equal to \begin{math}m_0\end{math} and so \begin{math}m\in \calM^*\end{math} and \begin{math}c''\in\calM_n^*\end{math}; which means \begin{math}c''\in\calM_{n+1}^*\end{math}.  Suppose instead that \begin{math}c'=c''\vee m\end{math}. Then \begin{math}c'=c''\vee m\vee m_0\end{math} since \begin{math}c'>m_0\end{math}. But \begin{math}m'\in m\vee m_0\in\calM^*\end{math}. Also, \begin{math}c''\in\calC\end{math} and so \begin{math}c''>m_0\end{math}.  That is, \begin{math}c''\in\calM_n\cap [m_0,1]\end{math} and so \begin{math}c'\in \calM_n^*\end{math}.  In either case, \begin{math}c''\in \calM_{n+1}^*\end{math}.
\end{enumerate}
   \end{proof}

 The conclusion of Observation \ref{1maxORmin}(\ref{part36b}) in the case \begin{math}\calC\cap  [m_0,1]=\emptyset\end{math}, contributed to the formulation of Hypothesis \ref{hyp3} about convexity (see Section \ref{S:Intro}).

    \begin{obs}\label{everywhere}
Suppose \begin{math}\calL\end{math} is a  lattice and that \begin{math}\calS\end{math} is a   sublattice of \begin{math}\calL\end{math}.  Suppose also that \begin{math}m'\end{math} is maximal in \begin{math}\calS\setminus 1\end{math} and \begin{math}c\in [m',1]\end{math}. For \begin{math}m\in\calS\end{math}, if \begin{math}m\not\leq m'\end{math} then \begin{math}m\vee c=1\end{math}. The dual statement holds as well.
   \end{obs}

   \begin{proof} Suppose the hypothesis and fix \begin{math}m\in\calS\end{math} and \begin{math}m\not\leq m'\end{math}. Then \begin{math}m\vee m'\in \calS\end{math} and \begin{math}m'\vee m=1\end{math}.  Thus, \begin{math}c\vee m\geq m'\vee m=1\end{math} and so \begin{math}c\vee m=1\end{math}.
  \end{proof}

\begin{obs}\label{maxm}
Suppose \begin{math}\calC\end{math} is the complement of a maximal sublattice \begin{math}\calM\end{math} of some lattice \begin{math}\calL\end{math}, \begin{math}|\calC|\geq 2\end{math} and that \begin{math}\calC\end{math} has a greatest element \begin{math}a\end{math}. Then \begin{math}\underbar{m}(a)\prec a\end{math}. The dual statement holds as well.
\end{obs}

\begin{proof}
If \begin{math}\underbar{m}(a)\end{math} is not a subcover of \begin{math}a\end{math}, then there exists \begin{math}b \in \calC\end{math} such that  \begin{math}\underbar{m}(a) < b\prec a\end{math}. We are going to show that \begin{math}\langle \calM \cup\ b \rangle \subseteq \calM\cup \downarrow b\end{math}, which contradicts the maximality of sublattice \begin{math}\calM\end{math}.

Indeed, \begin{math}\langle \calM \cup\ b \rangle = \bigcup_{n < \omega} \calM_n\end{math}, where \begin{math}\calM_0=\calM\cup\ b\end{math}, \begin{math}\calM_{n+1}=\calM_n^\vee \cup \calM_n^\wedge\end{math}. We prove by induction that \begin{math}\calM_n\subseteq \calM\cup \downarrow b\end{math}. Trivial for \begin{math}\calM_0\end{math}. Now suppose \begin{math}\calM_n\subseteq \calM\cup \downarrow b\end{math}, for some \begin{math}n\end{math}.

If we take \begin{math}x,y \in \calM_n\end{math}, where at least one of the elements, say, \begin{math}x\end{math} is not less than or equal to \begin{math}a\end{math}, then \begin{math}x \in \calM\end{math} and \begin{math}x\vee y \in \calM\end{math}, due to the fact that \begin{math}a\end{math} is the greatest element of \begin{math}\calC\end{math}.

If both \begin{math}x,y \leq a\end{math}, then each is either in \begin{math}\downarrow b\end{math} or in \begin{math}\calM\end{math}.  If either is in \begin{math}\calM\end{math}, then it is less than or equal to \begin{math}\underbar{m}(a)\end{math}, which itself is less than \begin{math}b\end{math}. Either way each is less than or equal to \begin{math}b\end{math}. Therefore, \begin{math}x\vee y \leq b\end{math}. This shows that \begin{math}x\vee y \in \calM\cup \downarrow b\end{math}.

For \begin{math}x\wedge y\end{math},  if either \begin{math}x\end{math} or \begin{math}y\end{math} is in \begin{math}\downarrow b\end{math}, then \begin{math}x\wedge y \in \downarrow b\end{math}. If both are in \begin{math}\calM\end{math} then \begin{math}x\wedge y \in \calM\end{math}.

This shows that \begin{math}\calM_{n+1} \subseteq \calM\cup \downarrow b\end{math}, and we are done.
\end{proof}

 \section{SD\begin{math}_\vee\end{math}  and SD\begin{math}_\wedge\end{math} Lattices}\label{SDjoin}

Here we summarize our results for SD\begin{math}_\vee\end{math} and SD\begin{math}_\wedge\end{math} lattices. We remind you of our hypothesis involving SD\begin{math}_\vee\end{math} and SD\begin{math}_\wedge\end{math} lattices: the complements of maximal sublattices of SD\begin{math}_\vee\end{math} lattices are unions of intervals with the same lower bound and hence have a unique minimal element and dually, the complements of maximal sublattices of SD\begin{math}_\wedge\end{math} lattices are unions of intervals having the same upper bound and hence they have a unique maximal element.  The example shown in Figure \ref{fig:maxnumbermaxelements} from Section \ref{CG} shows that we cannot expect complements of maximal sublattices of SD\begin{math}_\vee\end{math} and SD\begin{math}_\wedge\end{math} lattices to be intervals, in general. This shows that one can find convex geometries with \begin{math}cdim=n\end{math} that have complements of maximal sublattices with \begin{math}n\end{math} maximal elements. Since all convex geometries are SD\begin{math}_\vee\end{math}, one can have arbitrarily many maximal elements of the complement of a maximal sublattice of an SD\begin{math}_\vee\end{math} lattice. Dually, one can have arbitrarily many minimal elements of a complement of a maximal sublattice of an SD\begin{math}_\wedge\end{math} lattice.

Recall Definition \ref{manydefs} (\ref{def:cjmj}) of {\em canonical joinand}  and {\em canonical meetand}. For our analysis of SD\begin{math}_\vee\end{math} and SD\begin{math}_\wedge\end{math} lattices the concepts of {\em strict} canonical joinands and meetands will be useful.

\begin{defn}\label{def:scjscm} Given \begin{math}\calC=\calL\setminus \calS\end{math}, for a sublattice \begin{math}\calS\end{math} of a lattice \begin{math}\calL\end{math}. Then if \begin{math}j\end{math} is a canonical joinand of \begin{math}x\in \calC\end{math}, we call \begin{math}j\end{math} a \emph{strict canonical joinand} (or {\em scj}) with respect to \begin{math}\calC\end{math} if \begin{math}[j,x]\subseteq \calC\end{math}. Similarly, if \begin{math}k\end{math} is  a canonical meetand of \begin{math}x\end{math}, then we call \begin{math}k\end{math} a {\em strict canonical meetand} (or {\em scm}) of \begin{math}x\end{math} with respect to \begin{math}\calC\end{math} if \begin{math}[x,k]\subseteq \calC\end{math}.  (When \begin{math}\calC\end{math} is obvious from context, we will omit the phrase ``with respect to \begin{math}\calC\end{math}.'')
\end{defn}

\begin{lem}\label{onlyCincanonicalrep2}
  Let \begin{math}\calC=\calL\setminus \calS\end{math}, for some sublattice \begin{math}
  \calS\end{math} of an SD\begin{math}_\vee\end{math} lattice \begin{math}\calL\end{math}. Every \begin{math}x \in \calC\end{math} has at least one strict canonical joinand. Dually, if \begin{math}\calL\end{math} is an SD\begin{math}_\wedge\end{math} lattice, then every element in \begin{math}\calC\end{math} has a strict canonical meetand.
  \end{lem}

  \begin{proof}
  Since \begin{math}\calL\end{math} is SD\begin{math}_\vee\end{math}, every element \begin{math}x\end{math} has a canonical join representation. Fix \begin{math}x\in \calC\end{math} with canonical join representation \begin{math}x=\bigvee J\end{math}. If none of \begin{math}j\in J\end{math} is strict, then there exists \begin{math}s_j\in \calS\cap [j,x]\end{math}, for each \begin{math}j \in J\end{math}, therefore, \begin{math}x = \bigvee_{j\in J} s_j \in \calS\end{math}, a contradiction.
  \end{proof}

 Let \begin{math}\calL\end{math} be a lattice. One can show that \begin{math}\calL\end{math} is SD\begin{math}_{\vee}\end{math} if and only if it satisfies the following condition
\begin{displaymath}
  w=\bigvee_{i}a_i=\bigvee_{j}b_j \text{ implies } w=\bigvee_{i,j}(a_i\wedge b_j).
\end{displaymath}
The dual statement holds for any SD\begin{math}_{\wedge}\end{math} lattice.

  \begin{lem}\label{L++}
  Suppose that in an SD\begin{math}_{\vee}\end{math} lattice we have \begin{math}y=\bigvee_{i=1}^n\bigwedge_{j=1}^{k_i}a_i^j\end{math}. If for each \begin{math}(j_1,\ldots ,j_n)\in\mathbb{N}^n\end{math} with \begin{math}1\leq j_i\leq k_i\end{math} we have \begin{math}\bigvee_{i=1}^na_i^{j_i}  = z\end{math}, then \begin{math}z=y\end{math}. The dual statement holds in an SD\begin{math}_{\wedge}\end{math} lattice.
   \end{lem}

   Figure \ref{fig:lem4.3} illustrates the point of this lemma.

\begin{figure}[hbt]
  \begin{center}
    \includegraphics[width=0.5\linewidth]{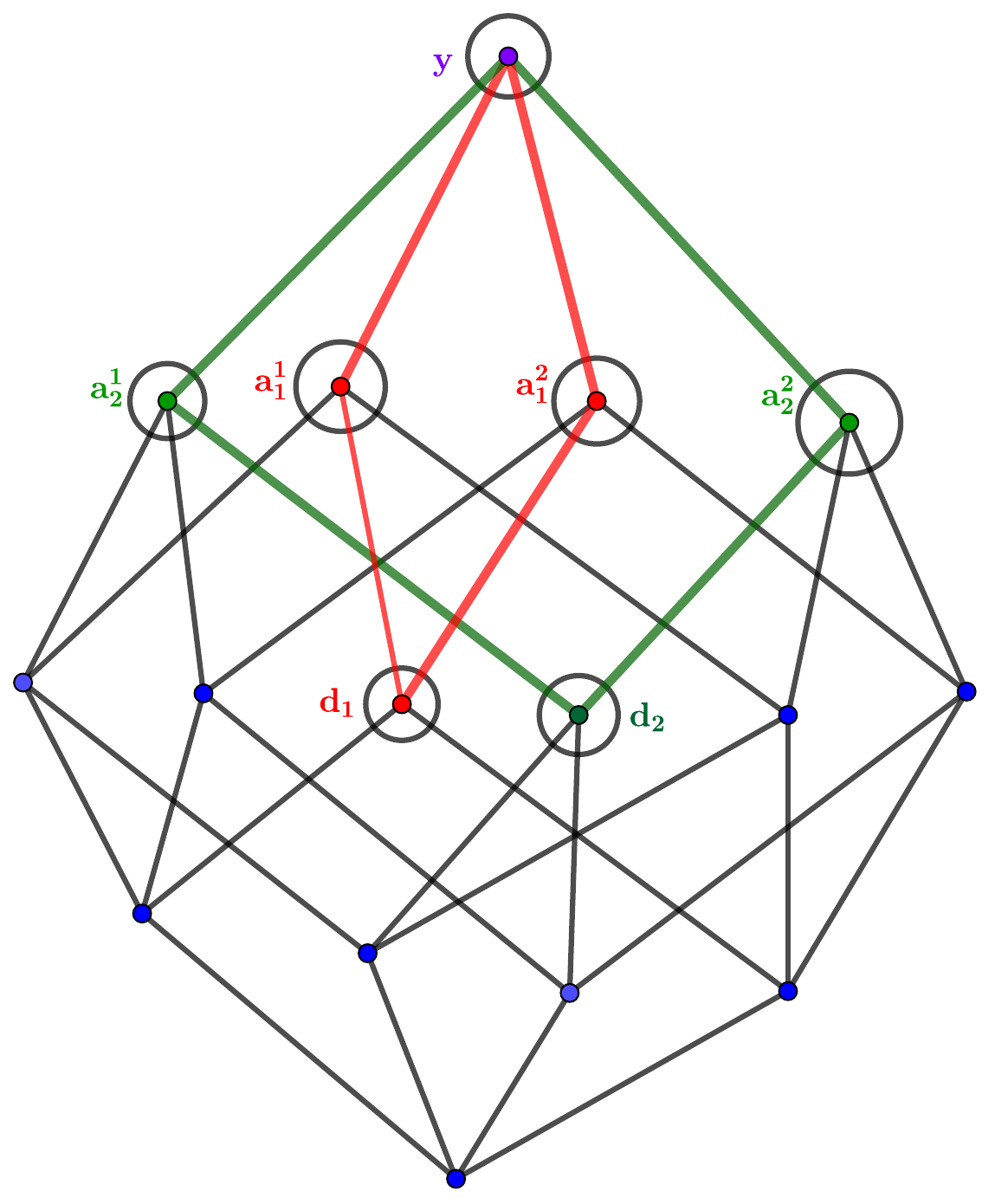}
     \caption{This graphic is to illustrate Lemma \ref{L++}. This lattice happens to be a CG with \begin{math}cdim=4\end{math} and hence is SD\begin{math}_\vee\end{math}. We see that \begin{math}d_1=a_1^1\wedge a_1^2\end{math}, \begin{math}d_2=a_2^1\wedge a_2^2\end{math}, and \begin{math}y=a_1^1\vee a_2^1=a_1^1\vee a_2^2=a_1^2\vee a_2^1=a_1^2\vee a_2^2\end{math}. Thus, Lemma \ref{L++} then guarantees what is shown, that \begin{math}y=d_1\vee d_2\end{math}. }
    \label{fig:lem4.3}
  \end{center}
\end{figure}
   \begin{proof} Let \begin{math}d_i\end{math} denote \begin{math}\bigwedge_{j=1}^{k_i}a_i^j\end{math} for \begin{math}i=1,\ldots ,n\end{math}. Then \begin{math}y=\bigvee_{i=1}^nd_i\end{math}.  Clearly \begin{math}y\leq z\end{math} since \begin{math}d_i\leq a_i^j\end{math} for each \begin{math}j=1,\ldots ,k_i\end{math}.  Let \begin{math}z=\bigvee_{s\in I} w_s\end{math} be the canonical join representation for \begin{math}z\end{math}.  We will show that for each \begin{math}s\in I\end{math}  there is \begin{math}i\end{math} with \begin{math}w_s\leq d_i\end{math}, which implies \begin{math}z\leq y\end{math}.

Fix \begin{math}w_t\end{math}, for some \begin{math}t\in I\end{math}. If for some \begin{math}i=1,\ldots ,n-1\end{math} we have \begin{math}w_t \leq a_i^j\end{math} for all \begin{math}j=1,\ldots ,k_i\end{math}, then \begin{math}w_t\leq \bigwedge_{j=1}^{k_i}a_i^j=d_i\end{math}, and we are done. Therefore, without loss of generality assume that \begin{math}w_t\not \leq a_i^1\end{math} for each \begin{math}i=1,\ldots ,n-1\end{math}.   By hypothesis, \begin{math}\bigvee_{i=1}^{n} a_i^1\vee a_n^j =z\end{math} and so  \begin{math}\bigvee_{s\in I} w_s \ll \bigvee_{i=1}^{n-1} a_i^1\vee a_n^j\end{math} for each \begin{math}j=1,\ldots , k_n\end{math}. Thus, we can conclude that \begin{math}w_t\leq a_n^j\end{math} for each such \begin{math}j\end{math}. That is, \begin{math}w_t\leq d_n\end{math}.  Thus, for each \begin{math}w\in I\end{math}, \begin{math}w_s\leq d_i\end{math} for at least one \begin{math}i=1,\ldots n\end{math} and so \begin{math}z\leq y\end{math}.
\end{proof}

Recall Definition \ref{manydefs}(6) of \begin{math}T^{\wedge}\end{math} and \begin{math}T^{\vee}\end{math}. Given a subset \begin{math}S\end{math} of a lattice \begin{math}\calL\end{math}, the sublattice \begin{math}\left<S\right>\end{math} generated by \begin{math}S\end{math} can be obtained as the limit of the increasing sequence of sets \begin{math}S_1=S^{\wedge\vee}\end{math}, \begin{math}S_2=S^{\wedge\vee\wedge\vee}=S_1^{\wedge\vee}\end{math}, \begin{math}S_3=S^{\wedge\vee\wedge\vee\wedge\vee}=S_2^{\wedge\vee}\end{math}, \begin{math}\ldots\end{math}. We note that any such \begin{math}S_n\end{math} is closed under join.

\begin{thm}\label{gist} Suppose that \begin{math}\mathcal{L}\end{math} is an SD\begin{math}_{\vee}\end{math} lattice, that \begin{math}\mathcal{S}\end{math} is a sublattice of \begin{math}\mathcal{L}\end{math}, that \begin{math}m'\end{math} is maximal in \begin{math}\calS\setminus 1\end{math} and that \begin{math}c\in (m',1)\end{math} is a coatom.   Then for any \begin{math}d\in\calL\end{math},  \begin{math}c \in\left<\calS\cup\ d\right>\end{math} if and only if \begin{math}m'\vee d=c\end{math} and hence \begin{math}d\leq c\end{math}.  Thus, in this case  if also \begin{math}\calS\end{math} is a maximal sublattice, then \begin{math}c\end{math} is the unique maximal element of \begin{math}\calC\end{math}. The dual statement holds for SD\begin{math}_{\wedge}\end{math} lattices.
\end{thm}

\begin{proof} Assume the hypothesis and fix \begin{math}d\in\calL\end{math}. Clearly if \begin{math}m'\vee d=c\end{math} then \begin{math}c\in\left<\calS\cup d\right>\end{math}. Suppose \begin{math}m'\vee d\not=c\end{math}. We may also assume that  \begin{math}d\not= 1\end{math} since \begin{math}\left<\calS\cup 1\right> =\calS\cup 1\end{math}. Then \begin{math}d\not> c\end{math}. Let \begin{math}S_0= \calS\cup d\end{math} and for \begin{math}n\geq 0\end{math} let \begin{math}S_{n+1}=S_n^{\wedge\vee}\end{math}.

We will show \begin{math}c\not\in S_n\end{math} for any \begin{math}n\end{math}.

(basis) Clearly \begin{math}c\notin \calS\cup d\end{math}.

(inductive step) Suppose we know that \begin{math}c\not\in S_n\end{math} for some \begin{math}n\geq 0\end{math} and suppose by way of contradiction that \begin{math}c\in S_{n+1}\end{math}.  Then \begin{math}c=\bigvee_{i=1}^ku_i\end{math} for some \begin{math}u_i=\bigwedge_{j\in J_i}u_i^j\end{math},
where \begin{math}u_i^j\in S_n\end{math} for each \begin{math}j\in J_i\end{math}. If it is the case that for each \begin{math}k-\end{math}tuple \begin{math}(j_1,\ldots , j_k)\in\prod_{i=1}^kJ_i\end{math} we have \begin{math}\bigvee_{i=1}^ku_i^{j_i}=1\end{math}, then by Lemma \ref{L++} we know \begin{math}c=\bigvee_{i=1}^ku_i=1\end{math}, which is a contradiction. So there must be some \begin{math}k-\end{math}tuple \begin{math}(j_1,\ldots , j_k)\in\prod_{i=1}^kJ_i\end{math} such that \begin{math}\bigvee_{i=1}^ku_i^{j_i}<1\end{math}. Since \begin{math}S_n\end{math} is closed under join, by the inductive hypothesis \begin{math}\bigvee_{i=1}^ku_i^{j_i}\not= c\end{math}. However, since \begin{math}u_i\leq u_i^{j_i}\end{math} for each \begin{math}i=1,\ldots ,k\end{math}, we see that \begin{math}c =\bigvee_{i=1}^ku_i\leq \bigvee_{i=1}^k\bigvee_{j\in J_i}u_i^{j_i}\end{math}, which is a contradiction since   \begin{math}c\end{math} is a coatom and  \begin{math}\bigvee_{i=1}^ku_i^{j_i}\end{math} is neither \begin{math}c\end{math} nor   1. Therefore, \begin{math}c\not\in S_{n+1}\end{math}, either.
\end{proof}

\begin{thm}\label{greatestelementinC} Let \begin{math}\calL\end{math} be an SD\begin{math}_\vee\end{math} lattice and \begin{math}\calM\end{math} be a maximal sublattice of \begin{math}\calL\end{math}. If there is a greatest   element in the complement \begin{math}\calC=\calL\setminus\calM\end{math}, then \begin{math}\calC\end{math} is an interval. The dual statement holds for SD\begin{math}_\wedge\end{math} lattices.
\end{thm}

\begin{proof}
Suppose \begin{math}a\end{math} is a greatest element in \begin{math}\calC\end{math}. If there is no element of \begin{math}\calM\end{math} that is less than \begin{math}a\end{math}, then \begin{math}\calC=[0,a]\end{math} and we are done. Suppose that \begin{math}\calM\cap \downarrow a\neq \emptyset\end{math}. Then, by Observation \ref{maxm} we see that \begin{math}\underbar{m}(a)\prec a\end{math} and by Observation \ref{1maxORmin}(\ref{part36a}) we know that \begin{math}\downarrow \underbar{m}(a) \subseteq\calM\end{math}.

Now we want to show that \begin{math}\calC\end{math} is closed under \begin{math}\wedge\end{math}. Suppose by way of contradiction that there exist \begin{math}b,c\in\calC\end{math} such that \begin{math}b\wedge c\in \calM\end{math}.  Then, since \begin{math}b,c\leq a\end{math}, we know \begin{math}b\wedge c \leq a\end{math} and so \begin{math}b\wedge c \leq \underbar{m}(a)\end{math}. But then \begin{math}\underbar{m}(a)\vee (b\land c) =\underbar{m}(a)\neq a\end{math}. However, since \begin{math}\underbar{m}(a)\prec a\end{math}, we also know that \begin{math}a=b\vee \underbar{m}(a)=c\vee \underbar{m}(a)\end{math}.     This contradicts the assumption that \begin{math}\calL\end{math} is join-semidistributive. Therefore, \begin{math}\calC\end{math} must be closed under \begin{math}\wedge\end{math}.  Thus,  \begin{math}d=\bigwedge \calC\in \calC\end{math}. We see that \begin{math}\calC\subseteq [d,a]\end{math}.

Now, suppose that \begin{math}d\leq f\leq a\end{math} for some \begin{math}f\in\calM\end{math}.  Then \begin{math}f\leq \underbar{m}(a)\end{math} and so \begin{math}d\leq \underbar{m}(a)\end{math}, which contradicts the fact that \begin{math}\downarrow \underbar{m}(a)\subseteq \calM\end{math}.  Thus, \begin{math}[d, a]\subseteq \calC\end{math}. That is, \begin{math}\calC=[d,a]\end{math}.
\end{proof}

Corollary \ref{coat} follows immediately from Theorems \ref{greatestelementinC} and \ref{gist}.

\begin{cor}\label{coat}
Let \begin{math}\calL\end{math} be an SD\begin{math}_\vee\end{math} lattice, \begin{math}\calM\leq \calL\end{math} be a maximal sublattice and \begin{math}a\end{math} be a coatom in \begin{math}\calL\end{math}. If \begin{math}a\in\calC=\calL\setminus \calM\end{math}, then \begin{math}\calC\end{math} is an interval.
\end{cor}

\begin{proof} Suppose \begin{math}\calL\end{math} is SD\begin{math}_\vee\end{math}, \begin{math}\calM\leq \calL\end{math} is a maximal sublattice with complement \begin{math}\calC\end{math}, and suppose that \begin{math}a\in\calC\end{math} is a coatom. By Theorem \ref{gist}, \begin{math}a\end{math} is the unique maximal element in   \begin{math}\calC\end{math}. Thus, by Theorem \ref{greatestelementinC}, \begin{math}\calC\end{math} is an interval.

\end{proof}

\section{Semidistributive Lattices}\label{SD}

Our main theorems in this section are Theorem \ref{atomandcoatom}, an immediate consequence of Theorem \ref{greatestelementinC} and Corollary \ref{coat} and their dual statements for SD\begin{math}_\wedge\end{math} lattices,
and Theorem \ref{onecomparable}, which shows that if there is one element of the complement of a maximal sublattice of an SD lattice that is comparable to every other element of the complement, then the complement is an interval.

    \begin{thm}\label{atomandcoatom} Suppose \begin{math}\calL\end{math} is an SD lattice and that \begin{math}\calM\end{math} is a maximal sublattice of \begin{math}\calL\end{math} with complement \begin{math}\calC\end{math}. If \begin{math}\calC\end{math} contains either a greatest or least element or if it contains either an atom or a coatom of \begin{math}\calL\end{math}, then \begin{math}\calC\end{math} is an interval.
  \end{thm}

 \begin{lem}\label{chain4}
 Assume that \begin{math}\calL\end{math} is an SD lattice, where \begin{math}w\leq x \leq y \leq z\end{math}, and that \begin{math}x\end{math} is a cj for \begin{math}z\end{math} and \begin{math}y\end{math} is a cm for \begin{math}w\end{math}. Then
\begin{math}\calL\setminus [x,y]\end{math} is a sublattice of \begin{math}\calL\end{math}.
 \end{lem}

 \begin{proof}
 If \begin{math}x=z\end{math}, then \begin{math}x=y\end{math} and thus \begin{math}x\end{math} is a doubly irreducible element, therefore, \begin{math}\calL\setminus x\end{math} is a sublattice. Similarly, we get a conclusion when \begin{math}y=w\end{math}. Therefore, assume that \begin{math}x<z\end{math} and \begin{math}w < y\end{math}.

 Since \begin{math}x\end{math} is a cj of \begin{math}z\end{math}, \begin{math}z=x\vee b\end{math}, where \begin{math}b\end{math} is the join of all canonical joinands of \begin{math}z\end{math} different from \begin{math}x\end{math}. Since \begin{math}y\end{math} is a cm of \begin{math}w\end{math}, \begin{math}w=y\wedge b'\end{math}, where \begin{math}b'\end{math} is the meet of all canonical meetands of \begin{math}w\end{math} different from \begin{math}y\end{math}.

 We will show that if \begin{math}m_1,m_2 \in  \calL_1=\calL\setminus [x,y]\end{math}, then \begin{math}m_1\vee m_2, m_1\wedge m_2 \in \calL_1\end{math}.

 Assume that \begin{math}m_1\vee m_2\end{math} is in \begin{math}[x,y]\end{math}. Then \begin{math}y\geq m_1, m_2\end{math}.  Also, \begin{math}z=m_1\vee m_2\vee b\end{math}. Therefore, \begin{math}x\leq m_1\end{math} or \begin{math}x\leq m_2\end{math}, which would imply \begin{math}m_1\in [x,y]\end{math} or \begin{math}m_2\in [x,y]\end{math}, a contradiction.

 Symmetrically, assume that \begin{math}m_1\wedge m_2\end{math} is in \begin{math}[x,y]\end{math}. Then \begin{math}x\leq m_1, m_2\end{math}.  Also, \begin{math}w=m_1\wedge m_2\wedge b'\end{math}. Therefore, \begin{math}y\geq m_1\end{math} or \begin{math}y\geq m_2\end{math}, which would imply \begin{math}m_1\in [x,y]\end{math} or \begin{math}m_2\in [x,y]\end{math}, a contradiction.

 Therefore, \begin{math}\calL\setminus [x,y]\end{math} must be a sublattice.
\end{proof}

  Corollary \ref{sclinear} is a direct consequence of Lemma \ref{chain4} (and its dual) and says that whenever the complement of a maximal sublattice, \begin{math}\calC\end{math} has a linear chain whose (strict) joinand/meetand relationship is ``spiraling out'', as illustrated in Figure \ref{fig:5.3}, then \begin{math}\calC\end{math} must be an interval.

\begin{figure}[hbt]
  \begin{center}
    \includegraphics[width=0.5\linewidth]{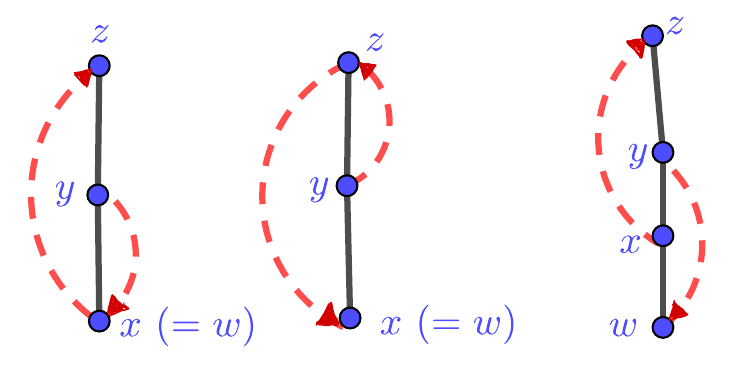}
     \caption{This figure illustrates Corollary \ref{sclinear}.  In the figure, there is a dotted arrow from point \begin{math}a\end{math} to point \begin{math}b\end{math} if \begin{math}a\end{math} is a cj or a cm of \begin{math}b\end{math}, and the corollary says that if the complement of a maximal sublattice has such a ``spiraling out'' chain, then it is an interval. }
    \label{fig:5.3}
  \end{center}
\end{figure}

 \begin{cor}\label{sclinear} Given that \begin{math}\calL\end{math} is an SD lattice, where \begin{math}w\leq x \leq y \leq z\end{math} and that \begin{math}x\end{math} is a  cj for \begin{math}z\end{math} and \begin{math}y\end{math} is a cm of \begin{math}w\end{math}.  Suppose also that \begin{math}\calM\end{math} is a maximal sublattice of \begin{math}\calL\end{math} with complement \begin{math}\calC\end{math} and \begin{math}x\end{math} is a  scj for \begin{math}z\end{math}. Then \begin{math}y=z\end{math} and \begin{math}\calC =[x,y]\end{math}. Dually, if \begin{math}y\end{math} is a scm of \begin{math}w\end{math}, then \begin{math}w=x\end{math} and \begin{math}\calC=[x,y]\end{math}.
 \end{cor}

 \begin{proof}
 It follows from Lemma \ref{chain4} that \begin{math}\calL\setminus [x,y]\end{math} is a sublattice. By assumption of Corollary \begin{math}[x,z]\subseteq \calC\end{math}, therefore, \begin{math}y=z\end{math} and \begin{math}\calC =[x,y]\end{math}.
 \end{proof}

\begin{lem}\label{OverTbridge} Let \begin{math}\calC\end{math} be the complement of a sublattice
\begin{math}\calS\end{math} of an SD lattice \begin{math}\calL\end{math}. Suppose the following is true: (a) \begin{math}u_1\end{math}, \begin{math}u_2\end{math}, \begin{math}t\end{math} and \begin{math}x\end{math} are elements of \begin{math}\calC\end{math}, (b)  \begin{math}t\end{math} is comparable to every element of \begin{math}\calC \cap [x,u_2]\end{math}, (c) \begin{math}x < t\leq u_i\end{math}, \begin{math}u_2\not \leq u_1\end{math}, and (d) \begin{math}u_1\end{math} is a scm
for \begin{math}x\end{math}. Then \begin{math}[x,u_2]\cap \calS \not =\emptyset\end{math}. In particular, \begin{math}x\end{math} cannot be a scj for \begin{math}u_2\end{math} and \begin{math}u_2\end{math} cannot be a scm for \begin{math}x\end{math}.

Dual statement also holds.
\end{lem}

Figure \ref{fig:lem5.4} illustrates the situation Lemma \ref{OverTbridge} does not allow in \begin{math}\calC\end{math}.

\begin{figure}[hbt]
  \begin{center}
    \includegraphics[width=0.4\linewidth]{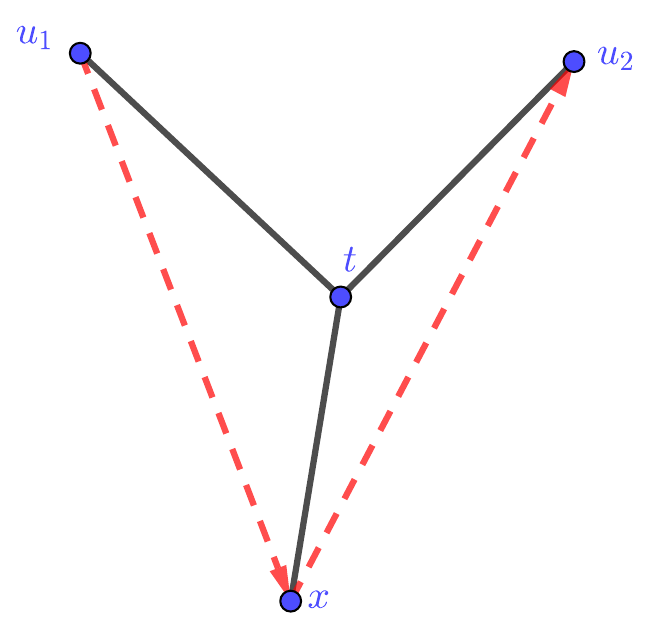}
     \caption{This figure illustrates the situation that Lemma \ref{OverTbridge} does not allow. The dotted arrow from \begin{math}u_1\end{math} to \begin{math}x\end{math} indicates \begin{math}u_1\end{math} is a scm of \begin{math}x\end{math} and the arrow from \begin{math}x\end{math} to \begin{math}u_2\end{math} indicates \begin{math}x\end{math} is a scj of \begin{math}u_2\end{math}. }
    \label{fig:lem5.4}
  \end{center}
\end{figure}

\begin{proof}
Suppose \begin{math}x=u_1\wedge b\end{math}, where \begin{math}b\end{math} is the meet of all of the other meetands of \begin{math}x\end{math}. Take \begin{math}s =u_2\wedge b\end{math}. Then \begin{math}x\leq s \leq u_2\end{math}. Note that since \begin{math}u_1\end{math} is a scm
of \begin{math}x\end{math} and \begin{math}u_2\not \leq u_1\end{math}, we get
\begin{math}x<s\end{math}.

If \begin{math}s\in \calC\end{math}, then either \begin{math}s\geq t\end{math} or \begin{math}s\leq t\end{math}.    If \begin{math}s\geq t\end{math}, then \begin{math}b\geq t\end{math} and so \begin{math}u_1\wedge b\geq t>x\end{math}, a contradiction.  If \begin{math}s\leq t\end{math}, then \begin{math}u_1\geq s\end{math} and so \begin{math}x=u_1\wedge b\geq s\end{math}, also a contradiction.
Therefore, \begin{math}s\in \calS\end{math}.
\end{proof}

\begin{thm} \label{onecomparable} Let \begin{math}\calL\end{math} be an SD lattice and \begin{math}\calM\end{math} be a maximal sublattice of \begin{math}\calL\end{math}. If there is an element \begin{math}a\in\calC=\calL\setminus\calM\end{math} that is comparable to every element of \begin{math}\calC\end{math}, then \begin{math}\calC\end{math} is an interval.
\end{thm}

\begin{proof}  Suppose by way of contradiction that \begin{math}\calC\end{math} is not an interval. By Lemma \ref{onlyCincanonicalrep2} there is a scj \begin{math}c_1\end{math} of \begin{math}a\end{math}. Clearly \begin{math}c_1\leq a\end{math}. Again by Lemma \ref{onlyCincanonicalrep2} there is a scm \begin{math}b_1\end{math} of \begin{math}c_1\end{math}. Since \begin{math}a\end{math} is comparable to every element of \begin{math}\calC\end{math}, either \begin{math}b_1\leq a\end{math} or \begin{math} b_1 > a\end{math}.  If \begin{math}b_1\leq a\end{math}, then we have \begin{math}c_1\leq b_1\leq a\end{math} and by Corollary \ref{sclinear} we know \begin{math}\calC =[c_1,b_1]\end{math}. So we can suppose \begin{math}a<b_1\end{math}. Again by Lemma \ref{onlyCincanonicalrep2} we know there is a scj \begin{math}c_2\end{math} of \begin{math}b_1\end{math}.  Again, since \begin{math}a\end{math} is comparable to everything in \begin{math}\calC\end{math}, either \begin{math}c_2\geq a\end{math} or \begin{math}c_2<a\end{math}. If \begin{math}c_2\geq a\end{math} then we know \begin{math}c_1\leq c_2\leq b_1\end{math} and so by Corollary \ref{sclinear} we know \begin{math}\calC = [c_2,b_1]\end{math}. So we can assume that \begin{math}c_2<a\end{math} and by the same reasoning, \begin{math}c_1\nleq c_2\end{math}.  Since \begin{math}[c_1,b_1]\subseteq \calC\end{math} and \begin{math}[c_2,b_1]\subseteq \calC\end{math}, by Lemma \ref{OverTbridge}, it also cannot be the case that \begin{math}c_1\end{math} and \begin{math}c_2\end{math} are incomparable. Therefore, it must be the case that \begin{math}c_2< c_1\end{math}. This is illustrated with the first graphic in Figure \ref{fig:thm5.5proof}.

\begin{figure}[hbt]
  \begin{center}
    \includegraphics[width=0.35\linewidth]{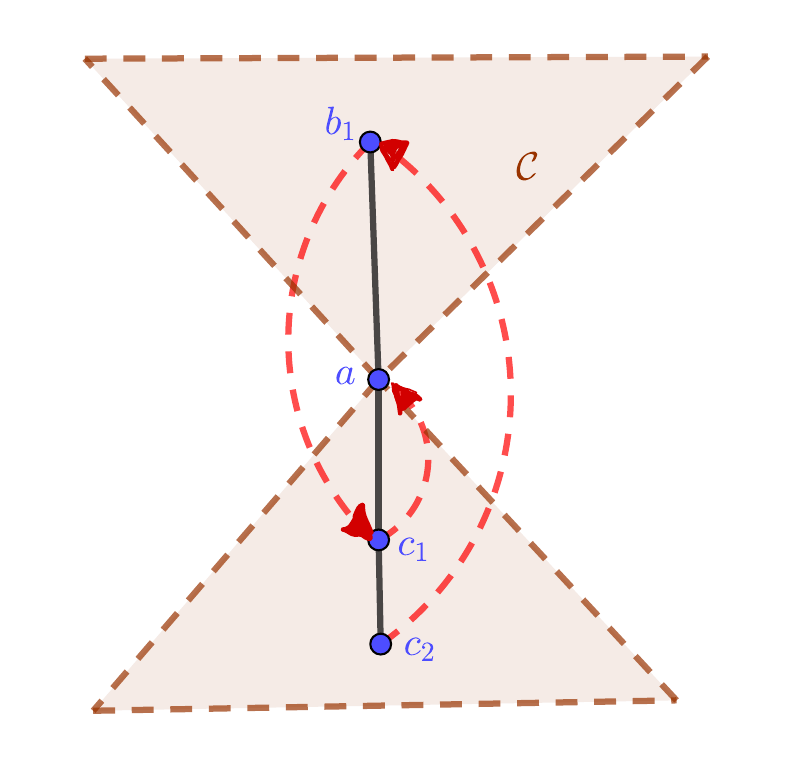} \quad \includegraphics[width=0.4\linewidth]{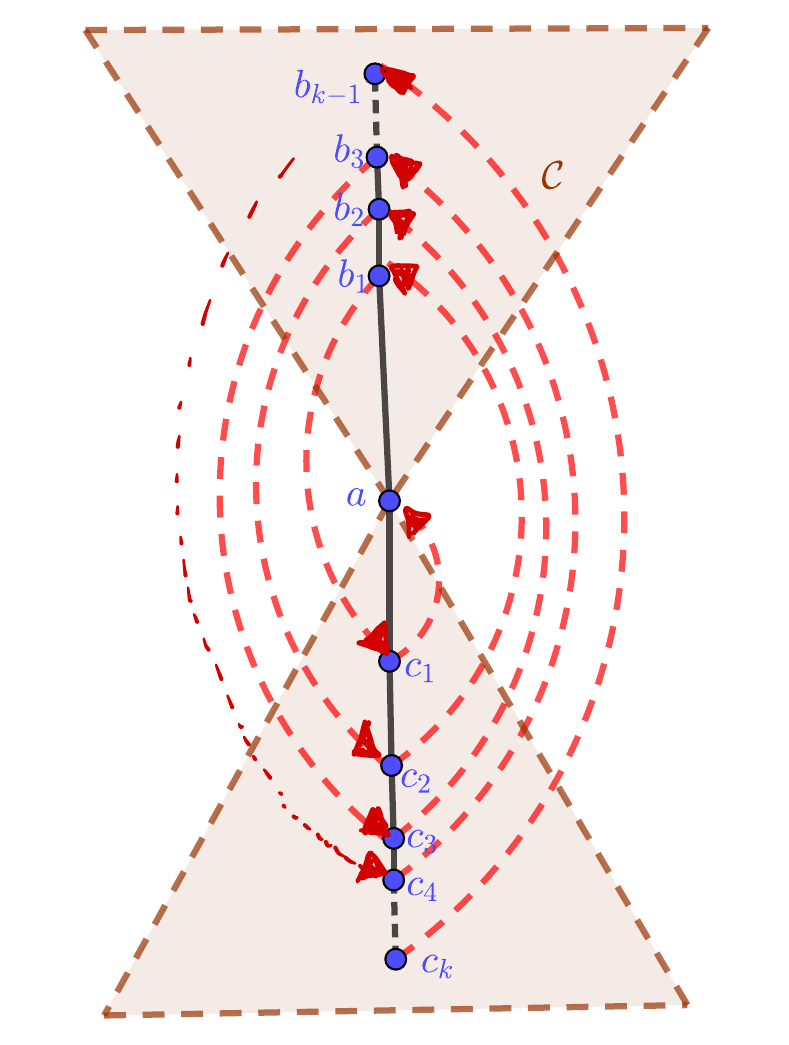}
     \caption{These images illustrate the proof of Theorem \ref{onecomparable}. As before, there is a dotted arrow from point \begin{math}a\end{math} to point \begin{math}b\end{math} if either \begin{math}a\end{math} is a cj or a cm of \begin{math}b\end{math}.}
    \label{fig:thm5.5proof}
  \end{center}
\end{figure}

We proceed inductively.  Suppose we have a sequence \begin{math}c_k< c_{k-1} < \cdots < c_2<c_1\leq a < b_1<b_2<\cdots < b_{k-1}\end{math} and that \begin{math}c_1\end{math} is a scj of \begin{math}a\end{math}, \begin{math}b_j\end{math} is a scm of \begin{math}c_j\end{math} for each \begin{math}j = 1, \ldots , k-1\end{math}, and \begin{math}c_{\ell }\end{math} is a scj of \begin{math}b_{\ell -1}\end{math} for each \begin{math}\ell =2,\ldots k\end{math}. This is illustrated with the  second  graphic in Figure \ref{fig:thm5.5proof}.

We know by Lemma \ref{onlyCincanonicalrep2} that there is a scm \begin{math}b_k\end{math} of \begin{math}c_k\end{math}. If \begin{math}b_k\leq b_{k-1}\end{math}, then \begin{math}c_k\leq b_k\leq b_{k-1}\end{math} and so by Corollary \ref{sclinear} we  have   \begin{math}\calC = [c_k,b_k]\end{math} and so we can assume that \begin{math}b_k\nleq b_{k-1}\end{math}. But then that means also that \begin{math}b_k\nleq a\end{math} and so \begin{math}b_k>a\end{math}.
Since \begin{math}[c_k,b_{k-1}]\subseteq\calC\end{math} and \begin{math}[c_k,b_k]\subseteq\calC\end{math}, by Lemma \ref{OverTbridge} \begin{math}b_k\end{math} and \begin{math}b_{k-1}\end{math} must be comparable. Therefore, \begin{math}b_{k-1}<b_k\end{math}. By the dual argument, there is a scj \begin{math}c_{k+1}\end{math} of \begin{math}b_k\end{math} and \begin{math}c_{k+1}<c_k\end{math}.  Thus, if \begin{math}\calC\end{math} is not an interval, we build an infinite chain in \begin{math}\calC\end{math}, contradicting the fact it is finite. Thus, \begin{math}\calC\end{math} must be an interval.
\end{proof}

\begin{cor}\label{onelgstorsmlst}
Let \begin{math}\calL\end{math} be an SD lattice, \begin{math}\calM\end{math} be a maximal sublattice of \begin{math}\calL\end{math}. If \begin{math}\calC=\calL\setminus\calM\end{math} has a greatest element or a least element, then \begin{math}\calC\end{math} is an interval.
\end{cor}

\section{Convex Geometries}\label{CG}

In this section we prove Theorem \ref{BigCGThm}, which describes precisely what the complements of maximal sublattices of CGs with \begin{math}cdim=2\end{math} can be. As we will see, these complements can have at most 2 maximal elements. As all convex geometries are SD\begin{math}_\vee\end{math} lattices, everything from Section \ref{SDjoin} holds for convex geometries.

Recall that a CG with \begin{math}cdim=n\end{math} has a maximal antichain of  meet irreducible elements of size \begin{math}n\end{math}, and maximal elements in the complement of a maximal sublattice are meet irreducibles. Thus, the complement of a maximal sublattice of a CG  with \begin{math}cdim=n\end{math} can have at most \begin{math}n\end{math} maximal elements itself. Figure \ref{fig:maxnumbermaxelements} shows that we can find CGs of \begin{math}cdim=n\end{math} that have complements of maximal sublattices with exactly \begin{math}n\end{math} maximal elements.
 One can check that \begin{math}[2,12]\cup[2,24]\cup\cdots\cup [2,2(n+2)]\end{math} is the complement of a maximal sublattice with \begin{math}n\end{math} maximal elements.

Let \begin{math}\calG =\left<  X, \calG \right>\end{math} be a convex geometry. If \begin{math}x\in X\end{math}, then we let \begin{math}(x)\end{math} denote the smallest convex set of \begin{math}X\end{math} containing \begin{math}x\end{math}.   It will always be join irreducible in a standard closure system and thus in convex geometries and every join irreducible element of a convex geometry  \begin{math}\calG =\left<  X, \calG \right>\end{math} is \begin{math}(x)\end{math} for some \begin{math}x\in X\end{math}.  Furthermore, in convex geometries if \begin{math}x\neq y\end{math} then \begin{math}(x)\neq (y)\end{math}.

For this section we refer you to the definition of convex geometry and related concepts from Definition \ref{manydefs}(\ref{CGdef}) in Section \ref{preliminaries}.

 \subsection{Convex Geometries with \begin{math}cdim=2\end{math}}

The goal of this subsection is to prove Theorem \ref{BigCGThm}, which describes all the complements of convex geometries with \begin{math}cdim=2\end{math}.

For the entirety of this subsection we will assume \begin{math}\{i,i'\}=\{1,2\}\end{math}. Recall that we refer to the unique cover of a meet irreducible element \begin{math}m\end{math} as \begin{math}m^*\end{math}, we refer to the unique subcover of a join irreducible element \begin{math}j\end{math} as \begin{math}j_*\end{math}.

We build to a proof of the following theorem:

\begin{thm}\label{BigCGThm}
    The complements of maximal sublattices of a convex geometry \begin{math}\calG =\left<  X, \calG \right>\end{math} with \begin{math}cdim=2\end{math} generated by chains \begin{math}C_1\end{math} and \begin{math}C_2\end{math} are precisely sets of one of the three forms:
    \begin{enumerate}

        \item\label{cg1}
       Intervals \begin{math}[(j),C_i(j)]\end{math}, where there is \begin{math}x\in X\end{math} such that \begin{math}C_i(j)\prec C_i(x)\end{math} and \begin{math}C_{i'}(j)>C_{i'}(x)\end{math}. In this case \begin{math}(j)\end{math} is not on \begin{math}C_{i'}\end{math}.

         \item\label{cg2}         Intervals \begin{math}[(j),C_i(j)]\end{math}, where \begin{math}(j)\end{math} is on \begin{math}C_{i'}\end{math} and there is an \begin{math}x\in X\end{math} such that both \begin{math}C_1(j)\prec C_1(x)\end{math} and \begin{math}C_2(j)\prec C_2(x)\end{math}.

        \item\label{cg3}
       Union of intervals \begin{math}[(j),C_1(j)]\cup [(j),C_2(j)]\end{math}, where \begin{math}(j)\end{math} is not on  \begin{math}C_1\end{math} nor \begin{math}C_2\end{math} and there is an \begin{math}x\in X\end{math} such that both \begin{math}C_1(j)\prec C_1(x)\end{math} and \begin{math}C_2(j)\prec C_2(x)\end{math}.

    \end{enumerate}
\end{thm}

    \begin{note} Since the maximal elements of complements of sublattices are necessarily meet irreducible, in cases (\ref{cg1}) and (\ref{cg2}) above, \begin{math}C_i(j)=M_i(j)\end{math} for the appropriate \begin{math}i\end{math} and in case (\ref{cg3}) we have \begin{math}C_i(j)=M_i(j)\end{math} for each \begin{math}i\end{math}.
    \end{note}

If the complement of the maximal sublattice consists of a single doubly irreducible element of the lattice, then it is a special case of item (\ref{cg1}) unless all of \begin{math}(j), (j)_*,(j)^*\in C_1\cap C_2\end{math} in which case it is a special case of item (\ref{cg2}).

\begin{exam}\label{examp:mainCGthmexamp} Figure \ref{fig:CompsOfCG2} illustrates the three types of complements of maximal sublattices of a CG with \begin{math}cdim=2\end{math}. It is generated by chains \begin{math}C_1\end{math} and \begin{math}C_2\end{math}, determined by the ordering \begin{math}<_1\end{math} and \begin{math}<_2\end{math} on the set \begin{math}X=\{ 1,\ldots, 10\}\end{math}, respectively. The singleton \begin{math}\{ (2)\}\end{math} illustrates the complement of a maximal sublattice of type (\ref{cg1}) that consists of a single doubly-irreducible element of the lattice. In this case \begin{math}(2)=C_1(2)\end{math},  \begin{math}C_1(2)\prec C_1(3)\end{math}, and \begin{math}3\in C_2(2)\end{math} (which you can tell because \begin{math}C_2(3)<C_2(2)\end{math}). The interval \begin{math} [(5),C_1(5)]\end{math}  illustrates another complement of a maximal sublattice of type (\ref{cg1}). Notice that   \begin{math}C_1(5)\prec C_1(6)\end{math} while \begin{math}6\in C_2(5)\end{math}. The interval \begin{math}[(6),C_1(6)]\end{math} illustrates a complement of type (\ref{cg2}). We see that  \begin{math}C_i(6)\prec C_i(7)\end{math} for each \begin{math}i\in \{ 1,2\}\end{math} and \begin{math}(6)=C_2(6)\end{math}.  Finally, the union of intervals \begin{math}[(8),C_1(8)]\cup [(8),C_2(8)]\end{math} illustrates   a complement of a maximal sublattice of type (\ref{cg3}). Note \begin{math}(8)\end{math} is in neither chain and \begin{math}C_i(8)\prec C_i(9)\end{math} for each \begin{math}i\end{math}.

\begin{figure}[hbt]
  \begin{center}
    \includegraphics[width=0.55\linewidth]{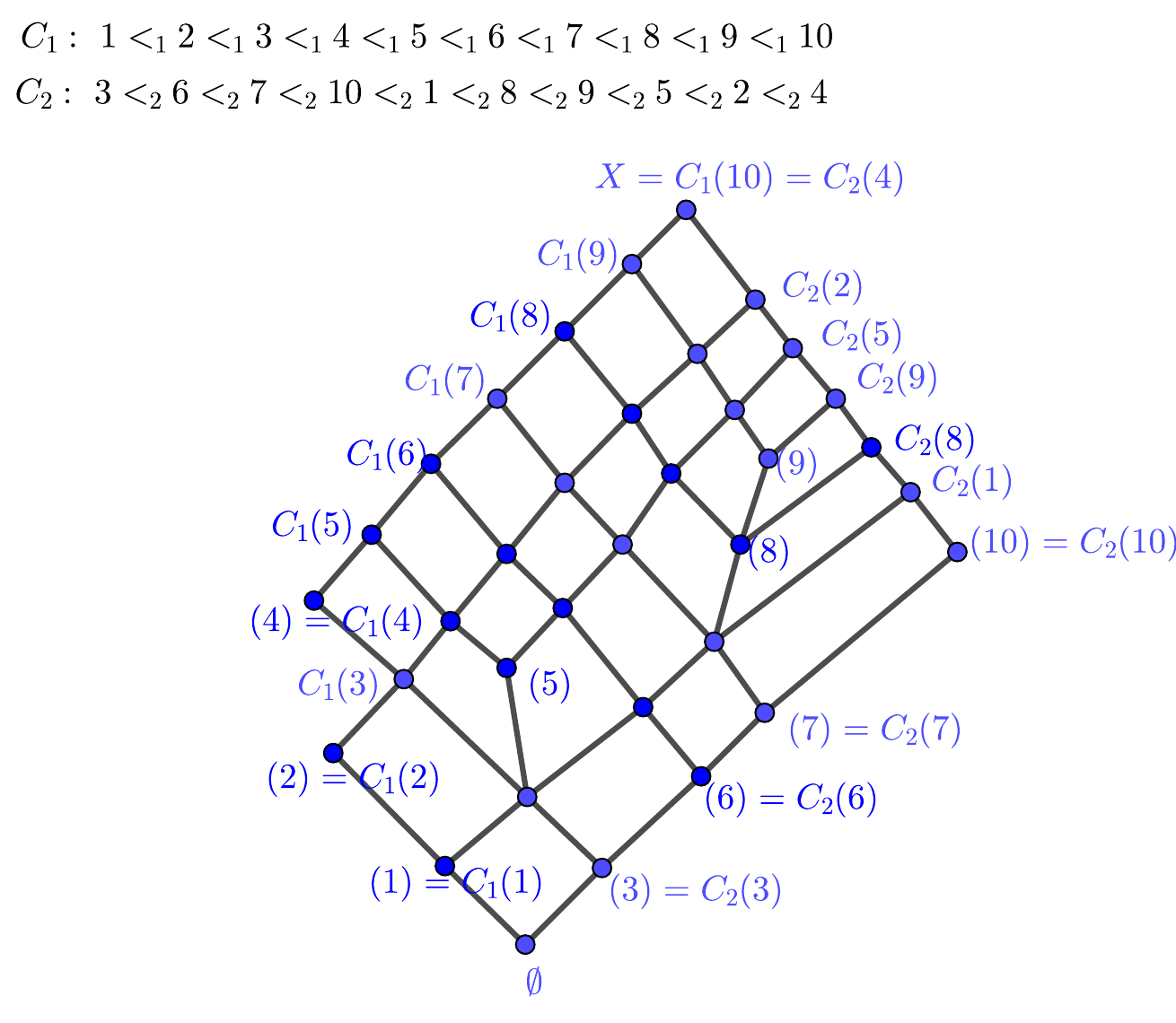}
     \caption{The three types of complements  of maximal sublattices of CGs with \begin{math}cdim=2\end{math}, explained in Example \ref{examp:mainCGthmexamp}.}
    \label{fig:CompsOfCG2}
  \end{center}
\end{figure}

\end{exam}

 If \begin{math}C_1\end{math} and \begin{math}C_2\end{math} are chains generating a convex geometry with \begin{math}cdim=2\end{math} and \begin{math}a\in C_1\cap C_2\end{math}, then \begin{math}a\end{math} is comparable to every element of the convex geometry. Thus, Observation \ref{obs0}  from Section \ref{observations} assures us that we may restrict our analysis to convex geometries of \begin{math}cdim=2\end{math} for which the intersection of the two generating chains contains only the minimal element \begin{math}\emptyset\end{math} and maximal element \begin{math}X\end{math}. We will say chains \begin{math}C_1\end{math} and \begin{math}C_2\end{math} generating a convex geometry \begin{math}\calG =\left<  X, \calG \right>\end{math} of \begin{math}cdim=2\end{math} have {\em trivial intersection} if \begin{math}C_1\cap C_2=\{ \emptyset , X\}\end{math}.

 \begin{rem} Note that generating chains have trivial intersection if and only if \begin{math}X\end{math} cannot be written as the disjoint union of non-empty sets \begin{math}X_1\end{math} and \begin{math}X_2\end{math} such that the orderings that determine both \begin{math}C_1\end{math} and \begin{math}C_2\end{math} have every element of \begin{math}X_1\end{math} as less than every element of \begin{math}X_2\end{math}.
 \end{rem}

 Lemma \ref{JI}
     below summarizes some simple facts about the structure of the generating chains of convex geometries with \begin{math}cdim=2\end{math}.

\begin{lem}\label{JI} Suppose \begin{math}\calG =\left<  X, \calG \right>\end{math} is a convex geometry of \begin{math}cdim=2\end{math} generated by chains \begin{math}C_1\end{math} and \begin{math}C_2\end{math}.  Then each of the following is true.
\begin{enumerate}
\item\label{ji1} For every \begin{math}x\in X\end{math}, \begin{math}(x)=C_1(x)\wedge C_2(x) =M_1(x)\wedge M_2(x)=c_1\wedge c_2\end{math} for each \begin{math}c_i\in [(x), M_i(x)]\end{math}.

\item\label{jiintersection} For each \begin{math}x\in X\end{math}, \begin{math}[(x),M_1(x)]\cap [(x), M_2(x)]=\{ (x)\}\end{math} and so \begin{math}[(x),C_1(x)]\cap [(x), C_2(x)]=\{ (x)\}\end{math}, as well.

\item\label{C=M} Let \begin{math}C_1(x)\prec C_1(x_1)\end{math} and \begin{math}C_2(x)\prec C_2(x_2)\end{math}. If \begin{math}x_1\in C_2(x)\end{math} or \begin{math}x_1=x_2\end{math}, then \begin{math}C_1(x)=M_1(x)\end{math}.

\item\label{ji5}   For any \begin{math}a\in\calG\end{math} which is not the minimal element of \begin{math}\calG\end{math}, \begin{math}C_i(a) = C_i(j)\end{math} for some \begin{math}j\in X\end{math} with \begin{math}(j)\leq a\end{math}.

\item\label{ji10} For any \begin{math}x\in X\end{math}, \begin{math}[\emptyset , C_i(x)] = [  (x),C_i(x)]\dot{\cup} [\emptyset , C_i(p)]\end{math}, where \begin{math}C_i(p)\prec C_i(x)\end{math}.

\item\label{jiCi=X} Suppose \begin{math}C_i(x)=X\end{math} for some \begin{math}x\in X\end{math}.  Then \begin{math}(x)=C_{i'}(x)
\in C_{i'}\end{math} and if \begin{math}(x)\neq X\end{math}, then \begin{math}(x)=M_{i'}(x)\end{math}, as well.

\end{enumerate}
\end{lem}
\begin{proof} Fix a convex geometry \begin{math}\calG \end{math} with \begin{math}cdim=2\end{math}, generating chains \begin{math}C_1\end{math} and \begin{math}C_2\end{math}, and base set \begin{math}X\end{math}.

\begin{enumerate}
    \item[\ref{ji1}.]  Fix \begin{math}x\in X\end{math}. We know that \begin{math}(x)=m_1\wedge m_2\end{math} for some meet irreducible \begin{math}m_i\in C_i\end{math}.  But then, \begin{math}x\in m_i\end{math} and thus, \begin{math}m_i\geq M_i(x)\geq C_i(x)\end{math} for each \begin{math}i\end{math}. Thus, \begin{math}(x)=m_1\wedge m_2\geq M_1(x)\wedge M_2(x)\geq C_1(x)\wedge C_2(x)\end{math}.  But also, \begin{math}x\in C_i(x)\end{math} and hence \begin{math}x\in C_1(x)\wedge C_2(x)\end{math}.   That is, \begin{math}(x)\leq C_1(x)\wedge C_2(x)\end{math} as well.  Hence \begin{math}(x)=C_1(x)\wedge C_2(x)=M_1(x)\wedge M_2(x)\end{math}. Further, if \begin{math}c_i\in [(x), M_i(x)]\end{math}, then \begin{math}(x)\leq c_1\wedge c_2\leq M_1(x)\wedge M_2(x)= (x) \end{math}.

\item[\ref{jiintersection}] Let \begin{math}a\in [(x),M_1(x)]\cap [ (x), M_2(x)]\end{math}. Then \begin{math}a\in [(x), M_2(x)]\end{math} and \begin{math}a\leq M_1(x)\end{math}. Thus,  by part (\ref{ji1}),  \begin{math}(x)=a\wedge M_1(x) =a\end{math}.

    \item[\ref{C=M}]  Denote \begin{math}a=C_1(x)\end{math}. It is enough to show that \begin{math}C_2(a) \geq C_1(x_1)=a\cup \{x_1\}\end{math}, i.e., \begin{math}C_1(x_1)=a^*\end{math}, a unique cover of \begin{math}a\end{math}. Apparently, \begin{math}x\in C_2(a)\end{math},  and therefore, \begin{math}x_1\in C_2(x)\end{math} implies \begin{math}x_1\in C_2(a)\end{math}. Now assume \begin{math}x_1=x_2\end{math} and denote \begin{math}b=C_2(x)\end{math}. If \begin{math}a=b\end{math}, then \begin{math}a, a\cup\{x_1\}\end{math} are in both chains, thus, \begin{math}a\end{math} is meet irreducible. So assume that \begin{math}a\not = b\end{math}. Then \begin{math}C_2(a) > b\end{math}, and therefore, \begin{math}x_1 \in C_2(a)\end{math}, which is needed.

      \item[\ref{ji5}]    Fix \begin{math}a\in \calG\end{math}   and let \begin{math}j\in X\end{math} be the greatest element of \begin{math}a\end{math} in  the ordering that defines chain \begin{math}C_i\end{math}. Then \begin{math}C_i(a)=C_i(j)\end{math} by the definition. Moreover, since \begin{math}j\in a\end{math}, we have \begin{math}(j)\leq a\end{math}.

      \item[\ref{ji10}.] Suppose \begin{math}x\in X\end{math} and without loss of generality, say \begin{math}C_1(p)\prec C_1(x)\end{math}. Fix \begin{math}u\in [\emptyset , C_1(x)]\end{math}. If \begin{math}u\geq (x)\end{math}, then \begin{math}u\in [(x),C_1(x)]\end{math}. So suppose \begin{math}u\ngeq (x)\end{math}. We know \begin{math}u=C_1(u)\wedge C_2(u)\end{math}.  Clearly \begin{math}C_1(u)\leq C_1(x)\end{math}.
      By (\ref{ji5}) above, since \begin{math}(x)\nleq u\end{math}, we know \begin{math}C_1(u)\neq C_1(x)\end{math}.  Thus, \begin{math}C_1(u)\leq C_1(p)\end{math}.  That is, \begin{math}u\in [\emptyset ,C_1(p)]\end{math}.
      In either case, \begin{math}u\in [\emptyset , C_1(p)]\cup [(x),C_1(x)]\end{math} and so  \begin{math}[\emptyset ,C_1(x)]\subseteq [\emptyset , C_1(p)]\cup [(x),C_1(x)]\end{math}.

      Now, suppose \begin{math}v\in [\emptyset , C_1(p)]\end{math}. Then \begin{math}v\leq C_1(p)\end{math} and so \begin{math}x\notin v\end{math} since \begin{math}x\notin C_1(p)\end{math}.  That is, \begin{math}(x)\nleq v\end{math} and so \begin{math}v\notin [(x),C_1(x)]\end{math}.  Thus, \begin{math}[\emptyset , C_1(p)]\end{math} and \begin{math}[(x),C_1(x)]\end{math} are disjoint.

\item[\ref{jiCi=X}.] Suppose without loss of generality that \begin{math}C_2(x)=X\end{math}. Then \begin{math}(x)=C_1(x)\wedge C_2(x)=C_1(x)\end{math}. If \begin{math}(x)\neq X\end{math} then there is \begin{math}x'\in X\end{math} such that \begin{math}C_1(x)\prec C_1(x')\leq X=C_2(x)\end{math}. Thus by part (\ref{C=M}) above, \begin{math}(x)=M_1(x)\end{math}.

\end{enumerate}
\end{proof}

\begin{lem}\label{theyarecomps} Let \begin{math}\calG =\left<  X, \calG \right>\end{math} be a convex geometry with  \begin{math}cdim=2\end{math} generated by chains \begin{math}C_1\end{math} and \begin{math}C_2\end{math} with trivial intersection and suppose \begin{math}C_i(j)\prec C_i(x_i)\end{math} for each \begin{math}i\in \{ 1,2\}\end{math}.
\begin{enumerate}
    \item\label{6.4_1} Suppose \begin{math}x_1=x_2\end{math}.

    \begin{enumerate}
    \item\label{6.4_1a} If \begin{math}(x_1)\end{math} is in neither chain, then \begin{math}(j)\end{math} is in neither chain, and \begin{math}[(j),C_1(j)]\cup [(j),C_2(j)]\end{math} is the complement of a sublattice of \begin{math}\calG \end{math}.

      \item\label{6.4_1b} If   \begin{math}(x_1)\in C_i\end{math} for one of the \begin{math}i\end{math}, then \begin{math}(j)=C_i(j)\end{math} is also in \begin{math}C_i\end{math}   and \begin{math}[(j),C_{i'}(j)]\end{math} is the complement of a sublattice of \begin{math}\calG \end{math}.

\end{enumerate}

        \item\label{6.4_2} If   \begin{math}x_1\neq x_2\end{math} and \begin{math}x_i\in C_{i'}(j)\end{math}, then   \begin{math}[(j), C_i(j)]\end{math} is the complement of a sublattice of \begin{math}\calG \end{math}.

        \item In all other cases \begin{math}[(j),C_{i'}(j)]\end{math} is not the complement of a sublattice.

 \end{enumerate}
\end{lem}

We note that if item \ref{6.4_1a} holds above, then necessarily \begin{math}C_1(j)=M_1(j)\end{math} and \begin{math}C_2(j)=M_2(j)\end{math} and if item   \ref{6.4_1b} or \ref{6.4_2} holds, then necessarily \begin{math}C_i(j)=M_i(j)\end{math} since the maximal elements of complements of sublattices are meet irreducible.

\begin{proof}  Assume the hypothesis.  We first show that a subset outside of the complement is closed under \begin{math}\vee\end{math}. Note that in all cases it is sufficient to show that if \begin{math}u,v\notin [(j), C_i(j)]\end{math}, then \begin{math}u\vee v\notin [(j),C_i(j)]\end{math}. Take \begin{math}u,v \in \calG\setminus [(j),C_i(j)]\end{math}. If \begin{math}u\not \leq C_i(j)\end{math}, then \begin{math}u\vee v \not \leq C_i(j)\end{math}.

 So now let \begin{math}u, v\leq C_i(j)\end{math} but \begin{math}(j)\not \leq u,v\end{math}.  If \begin{math}C_i(p)\prec C_i(j)\end{math}
then by Lemma \ref{JI} (\ref{ji10}) we know \begin{math}u,v\leq C_i(p)\end{math} and  hence \begin{math}u\vee v\leq C_i(p)\end{math} and thus is not in \begin{math}[(j),C_i(j)]\end{math}.  That is,   the complement of \begin{math}[(j),C_i(j)]\end{math} is closed under join for all \begin{math}j\in X\end{math}.  Thus to establish items \ref{6.4_1} and \ref{6.4_2} of the lemma we need only show the complement of the appropriate interval or union of intervals is also closed under meet and determine whether or not \begin{math}(j)\end{math} is on a  generating chain.

\begin{enumerate}

    \item     Suppose \begin{math}x_1=x_2=x\end{math}. Note that \begin{math}(j)=C_1(j)\wedge C_2(j)\leq C_1(x)\wedge C_2(x)=(x)\end{math} and so \begin{math}(j)<(x)\end{math}. But also, by semimodularity and since \begin{math}x\notin C_i(j)\end{math}, \begin{math}C_i(j)\wedge (x)\prec C_i(x)\wedge (x)= (x)\end{math}. But then \begin{math}(x)_*\in [(j), C_i(j)]\end{math} for each \begin{math}i\end{math} and so, by Lemma \ref{JI} (\ref{jiintersection}), \begin{math}(x)_*=(j)\end{math}.

    Fix \begin{math}w>(j)\end{math} such that \begin{math}w\notin [(j),C_1(j)]\cup [(j),C_2(j)]\end{math}.
    Then \begin{math} w =C_1(w)\wedge C_2(w)\end{math}. Since \begin{math} w \nleq C_i(j)\end{math}, \begin{math}C_i(w)>C_i(j)\end{math} and thus \begin{math}C_i(w)\geq C_i(x)\end{math} for each \begin{math}i\end{math}. But then \begin{math}x\in C_i(w)\end{math} for each \begin{math}i\end{math} and so \begin{math}x\in (w)\end{math}.  Thus, \begin{math}x\end{math} would be in the meet of any pair of elements greater than \begin{math}(j)\end{math} that are not in \begin{math}[(j),C_1(j)]\cup [(j),C_2(j)]\end{math} and so the complement of \begin{math}[(j),C_1(j)]\cup [(j),C_2(j)]\end{math} is   closed under the meet. Therefore, the complement of \begin{math}[(j),C_1(j)]\cup [(j),C_2(j)]\end{math} is a sublattice.

\begin{enumerate}
    \item Suppose that \begin{math}(x)\notin C_i\end{math}. Then \begin{math}(x)\neq C_i(x)\end{math} and so \begin{math}(x)< C_i(x)\end{math}. But then \begin{math}(j)<(x)<C_i(x)\end{math} and hence \begin{math}(j)\nprec C_i(x)\end{math}.  That is,  \begin{math}(j)\neq C_i(j)\end{math} and so \begin{math}(j)\end{math} is not on chain \begin{math}C_i\end{math}, either.

    Thus, if we assume \begin{math}(x)\end{math} is not in \begin{math}C_1\end{math} or \begin{math}C_2\end{math}, then   \begin{math}(j)\end{math} is in neither chain, as well.

    \item Without loss of generality, suppose \begin{math}(x)\in C_1\end{math}. Then \begin{math}(x)=C_1(x)\succ C_1(j)\geq (j)\end{math} and \begin{math}(x)\succ (j)\end{math}. Thus it must be the case that \begin{math}(j)=C_1(j)\end{math}; that is, that \begin{math}(j)\end{math} is on chain \begin{math}C_1\end{math}.

    We know that \begin{math}[(j), C_1(j)]\cup [(j), C_2(j)]\end{math} is the complement of a sublattice, but in this case \begin{math}[(j),C_1(j)]=\{ (j)\}\end{math} and so \begin{math}[(j), C_1(j)]\cup [(j), C_2(j)] =[(j), C_2(j)]\end{math} is the complement of a sublattice.

  \end{enumerate}

    \item Suppose \begin{math}x_1\neq x_2\end{math} and without loss of generality that \begin{math}x_1\in C_2(j)\end{math}. Then \begin{math}C_2(x_1)< C_2(j)\end{math}.

    Say \begin{math}u> (j)\end{math} and   \begin{math}u  \notin [(j),C_1(j)]\end{math}.  We know \begin{math}u=C_1(u)\wedge C_2(u)\end{math}. Since \begin{math}u\nleq C_1 (j)\end{math}, \begin{math}C_1(u)\geq  C_1(x_1) \succ C_1(j)\end{math}
    and hence \begin{math}x_1\in C_1(u)\end{math}. Also, since \begin{math}(j)<u\leq C_2(u)\end{math}, we know \begin{math}C_2(x_1)<C_2(j)\leq C_2(u)\end{math}. So, \begin{math}x_1\in C_2(u)\end{math} as well. That is, \begin{math}x_1\in C_1(u)\wedge C_2(u)=u\end{math}.   Hence, the meet of any two elements bigger than \begin{math}(j)\end{math} but not in \begin{math}[(j),C_1(j)]\end{math} will contain \begin{math}x_1\end{math} and, hence, will also not be in \begin{math}[(j),C_1(j)]\end{math}.   Therefore, the complement of \begin{math}[(j),C_1(j)]\end{math} is a sublattice.

    \item
    Suppose \begin{math}x_1\neq x_2\end{math} and, without loss of generality, that \begin{math}x_1\notin C_2(j)\end{math}. We need to show that \begin{math}[(j), C_1(j)]\end{math} is not the complement of a sublattice.

    Since \begin{math}x_1\notin C_2(j)\end{math}, \begin{math}C_2(j)<C_2(x_1)\end{math} and since \begin{math}x_1\neq x_2\end{math} and \begin{math}C_2(j)\prec C_2(x_2)\end{math}, this implies \begin{math}C_2(x_2)<C_2(x_1)\end{math}. Then \begin{math}x_1\notin C_2(x_2)\end{math} and hence \begin{math}x_1\notin  (x_1)\cap C_2(x_2)\end{math}. So, \begin{math}(x_1)\wedge C_2(x_2)<(x_1)\end{math}. However, it is the case that \begin{math}j\in C_2(x_1)\end{math} and so \begin{math}j\in C_1(x_1)\cap C_2(x_1)=\{ (x_1)\} \end{math} and \begin{math}j\in C_2(x_2)\end{math}.  That is, \begin{math}(j)\leq (x_1)\wedge C_2(x_2)<(x_1)\end{math}.

    By lower semi-modularity, since \begin{math}C_1(j)\prec C_1(x_1)\end{math}, either \begin{math}C_1(j)\wedge (x_1)=(x_1)\end{math} or \begin{math}C_1(j)\wedge (x_1)\prec (x_1)\end{math}.  Since \begin{math}C_1(j)\ngeq (x_1)\end{math}, it must be the case that \begin{math}C_1(j)\wedge (x_1)=(x_1)_* \prec (x_1)\end{math}.  But then, since \begin{math}(x_1)\wedge C_2(x_2)<(x_1)\end{math},
    we obtain \begin{math}(j)\leq (x_1)\wedge C_2(x_2)\leq (x_1)_*=C_1(j)\wedge (x_1)\leq C_1(j)\end{math}. That is, \begin{math}(x_1)\wedge C_2(x_2)\in [(j),C_1(j)]\end{math}, while
    \begin{math}(x_1), C_2(x_2) \not \leq C_1(j)\end{math}.
    Therefore, \begin{math}[(j),C_1(j)]\end{math} is not the complement of a sublattice.
\end{enumerate}
\end{proof}

\begin{lem}\label{allfoursect6lemmas}  Suppose \begin{math}\calG = \left< X, \calG  \right>\end{math} is a convex geometry of \begin{math}cdim =2\end{math} generated by chains \begin{math}C_1\end{math} and \begin{math}C_2\end{math}, with trivial intersection, and that \begin{math}\calC\end{math} is the complement of a sublattice \begin{math}\calS\end{math} of \begin{math}\calG \end{math}. Let \begin{math}(j)\in \calC\end{math} be a maximal join irreducible element of \begin{math}\calC\end{math}.

\begin{enumerate}
    \item\label{maxji1} Suppose that for some \begin{math}x\in X\end{math}, \begin{math}C_i(j)\prec C_i(x)\end{math} for each \begin{math}i=1,2\end{math}. Then \begin{math}[(j),C_1(j)]\cup [(j),C_2(j)]\subseteq \calC\end{math}.

\item\label{nolabel} If \begin{math}C_i(j)\prec C_i(x_i)\end{math} for some \begin{math}x_1\neq x_2\end{math}, then either \begin{math}x_1\in C_2(j)\end{math} or \begin{math}x_2\in C_1(j)\end{math}.

\item\label{maxji2}  Suppose that \begin{math}C_i(j)\prec C_i(x_i)\end{math} for some \begin{math}x_1\neq x_2\end{math}. Then for some \begin{math}i\in \{ 1,2\}\end{math}, \begin{math}[(j),C_i(j)]\subseteq \calC\end{math} and \begin{math}C_{i'}(x_i)<C_{i'}(j)\end{math}.

\item\label{maxji3} If \begin{math}C_i(j)\prec C_i(x_i)\end{math} for some \begin{math}x_1\neq x_2\end{math}, then for at least one of \begin{math}i\in\{ 1,2\}\end{math} we have \begin{math}C_{i'}(x_i)<C_{i'}(j)\end{math}, \begin{math}[(j),C_i(j)]\end{math} is a subset of \begin{math}\calC\end{math},   and \begin{math}[(j),C_i(j)]\end{math} is itself the complement of a  sublattice.
\end{enumerate}
    \end{lem}

\begin{proof} Assume the hypothesis.

\begin{enumerate}
    \item[(\ref{maxji1})] Suppose that for some \begin{math}x\in X\end{math}, \begin{math}C_i(j)\prec C_i(x)\end{math} for each \begin{math}i=1,2\end{math}.
We know \begin{math}(j) =C_1(j)\wedge C_2(j)\leq C_1(x)\wedge C_2(x)=(x)\end{math} and since \begin{math}(j)\neq (x)\end{math}, this implies \begin{math}(j)<(x)\end{math} and hence by our choice of \begin{math}(j)\end{math}, \begin{math}(x)\in\calS\end{math}. Also, by semimodularity, since \begin{math}x\notin C_i(j)\end{math} but \begin{math}x\in (x)\end{math}, \begin{math}C_i(j)\wedge (x)\prec (x)\end{math}. But, \begin{math}j\in C_i(j)\wedge (x)=(x)_*\end{math} and so \begin{math}(x)_*\in [(j),C_i(j)]\end{math} for each \begin{math}i\end{math}, which means that \begin{math}(j)=(x)_*\end{math} by Lemma \ref{JI} (\ref{jiintersection}). Furthermore, since \begin{math}(j)=(x)_*\end{math} and \begin{math}(x)\notin [(j), C_i(j)]\end{math} for either \begin{math}i\end{math}, for each \begin{math}b\in [(j), C_i(j)]\end{math}, we know \begin{math}b\wedge (x)=(j)\end{math} and so \begin{math}b\in\calC\end{math}. That is,   \begin{math}[(j),C_1(j)]\cup [(j),C_2(j)]\subseteq\calC\end{math}.

\item[(\ref{nolabel})] Suppose that \begin{math}C_i(j)\prec C_i(x_i)\end{math} for some \begin{math}x_1\neq x_2\end{math}. Suppose by way of contradiction that \begin{math}x_1\not \in C_2(j)\end{math} and \begin{math}x_2\not \in C_1(j)\end{math}. Then  \begin{math}C_2(j) < C_2(x_1)\end{math}. Therefore, \begin{math}(x_1)=C_1(x_1)\cap C_2(x_1) > C_1(j)\cap C_2(j)=(j)\end{math} and so by our choice of \begin{math}j\end{math}, \begin{math}(x_1)\in\calS\end{math}.
   Since \begin{math}C_1(j)\prec C_1(x_1)\end{math}, by lower semimodularity we know \begin{math}C_1(j)\wedge C_2(x_1)\prec C_1(x_1)\wedge C_2(x_1)=(x_1)\end{math}. That is, \begin{math}C_1(j)\wedge C_2(x_1)=(x_1)_*\end{math}. But we know then, since \begin{math}(j)<(x_1)\end{math}, \begin{math}(j)\leq (x_1)_*\leq C_1(j)\end{math}; that is, \begin{math}(x_1)_*\in [(j),C_1(j)]\end{math}.  Analogously, \begin{math}(x_2)\in\calS\end{math} and \begin{math}(x_2)_*\in [(j),C_2(j)]\end{math} and \begin{math}(x_2)\in \calS\end{math}.  But then, since \begin{math}x_1\neq x_2\end{math} and elements \begin{math}(x_1)\end{math} and \begin{math}(x_2)\end{math} are incomparable, thus    \begin{math}(x_1)\wedge (x_2)=(x_1)_*\wedge (x_2)_*=(j)\end{math}, which is a contradiction since \begin{math}(x_1),(x_2)\in \calS\end{math}.

     Thus, either \begin{math}x_1\in C_2(j)\end{math} or \begin{math}x_2\in C_1(j)\end{math}.

\item[(\ref{maxji2})] Suppose \begin{math}C_i(j)\prec C_i(x_i)\end{math} and \begin{math}x_1\neq x_2\end{math}. Assume without loss of generality that \begin{math}x_1\in C_2(j)\end{math}.  By Lemma 6.3(1), we know for each \begin{math}c_1\in [(j),C_1(j)]\end{math} and \begin{math}c_2\in [(j),C_2(j)]\end{math}, \begin{math}(j)=c_1\wedge c_2\end{math} and so at least one of \begin{math}[(j),C_i(j)]\end{math} is a subset of \begin{math}\calC\end{math}. If \begin{math}[(j),C_1(j)]\subseteq \calC\end{math}, then we are done. Suppose instead that \begin{math}[(j),C_1(j)]\nsubseteq \calC\end{math}. Then \begin{math}[(j),C_2(j)]\subseteq \calC\end{math}.  We need to show in this case that \begin{math}x_2\in C_1(j)\end{math}.
Suppose by way of contradiction that  \begin{math}x_2\notin C_1(j)\end{math}.  Then, \begin{math}C_1(j)\prec C_1(x_1)<C_1(x_2)\end{math} and \begin{math}C_2(x_1)<C_2(j)\prec C_2(x_2)\end{math}. We know, \begin{math}(x_2)=C_1(x_2)\wedge C_2(x_2)\geq C_1(j)\wedge C_2(j)=(j)\end{math} and \begin{math}(x_2)\neq (j)\end{math}. Hence, by our choice of \begin{math}(j)\end{math}, \begin{math}(x_2)\in \calS\end{math}. By lower semimodularity, since \begin{math}C_2(j)\prec C_2(x_2)\end{math} and \begin{math}x_2\notin C_2(j)\end{math}, we know \begin{math} C_1(x_2)\wedge C_2(j)\prec C_1(x_2)\wedge C_2(x_2)=(x_2)\end{math}. But \begin{math}(j)\leq C_1(x_2)\wedge C_2(j)\leq C_2(j)\end{math}. That is, \begin{math}(x_2)_*\in [(j), C_2(j)]\end{math}. Since \begin{math}[(j),C_1(j)]\nsubseteq \calC\end{math}, there is \begin{math}c\in [(j),C_1(j)]\cap \calS\end{math} and \begin{math}c\wedge (x_2)=c\wedge (x_2)_*=(j)\end{math}, contradicting the fact that \begin{math}(j)\in \calC\end{math} since both \begin{math}c,(x_2)\in\calS\end{math}.

\item[(\ref{maxji3})]  Suppose \begin{math}C_i(j)\prec C_i(x_i)\end{math} and \begin{math}x_1\neq x_2\end{math}.  By Lemma \ref{allfoursect6lemmas} (\ref{nolabel}) either \begin{math}x_1\in C_2(j)\end{math} or \begin{math}x_2\in C_1(j)\end{math}. Without loss of generality, suppose that \begin{math}x_1\in C_2(j)\end{math}. Then, by Lemma \ref{theyarecomps} (\ref{6.4_2}), \begin{math}[(j),C_1(j)]\end{math} is the complement of a sublattice. By Lemma \ref{allfoursect6lemmas} (\ref{maxji2}) either \begin{math}[(j),C_1(j)]\subseteq \calS\end{math} or also \begin{math}x_2\in C_1(j)\end{math} and \begin{math}[(j),C_2(j)]\subseteq \calS\end{math} and again by Lemma \ref{theyarecomps} (\ref{6.4_1b}),   \begin{math}[(j),C_2(j)]\end{math} is the complement of a sublattice.

Note that in each case, there is an \begin{math}i\in\{ 1,2\}\end{math} such that \begin{math}C_{i'}(x_i)<C_{i'}(j)\end{math}  and \begin{math}[(j),C_i(j)]\subseteq \calC\end{math}.

\end{enumerate}
\end{proof}

We finally prove Theorem \ref{BigCGThm}.

\begin{proof} (of Theorem \ref{BigCGThm})
Let \begin{math}\calG =\left<  X, \calG \right>\end{math} be a convex geometry with \begin{math}cdim=2\end{math} and generating chains \begin{math}C_1\end{math} and \begin{math}C_2\end{math}.  In light of Observation \ref{obs0}  from Section \ref{observations} we may assume \begin{math}C_1\end{math} ad \begin{math}C_2\end{math} have trivial intersection. Let \begin{math}\calC\end{math} be the complement of some maximal sublattice \begin{math}\calM\end{math}. \begin{math}\calC\end{math} has at least one minimal element, which must be join irreducible. Thus, we can choose a maximal join irreducible element \begin{math}(j)\end{math} in \begin{math}\calC\end{math}.

\begin{enumerate}

\item Suppose that, without loss of generality \begin{math}C_2(j)=X\end{math}. Then by  Lemma \ref{JI} (\ref{jiCi=X}), \begin{math}(j)=M_1(j)\end{math}. That is \begin{math}(j)\end{math} is doubly irreducible  and hence \begin{math}\calC =\{ (j)\}\end{math}. We know \begin{math}(j)\neq X\end{math} since \begin{math}X\in \calM\end{math} and so there is an \begin{math}x\in X\end{math} with \begin{math}(j)=C_1(j)\prec C_1(x)\end{math}. Since \begin{math}C_1(j)\end{math} is meet irreducible, \begin{math}(j)=C_1(j)\prec C_1(x)\leq C_2(j)\end{math}.
That is, \begin{math}[(j), C_1(j)]=\{ (j)\}\end{math}  is a complement of type (\ref{cg1}) of Theorem \ref{BigCGThm}.

\item Suppose that \begin{math}C_i(j)\neq X\end{math} for either \begin{math}i\end{math}. Then for some \begin{math}x_i\in X\end{math},  \begin{math}C_i(j)\prec C_1(x_i)\end{math}.  We have two cases: \begin{math}x_1\neq x_2\end{math} or \begin{math}x_1=x_2\end{math}.

\begin{enumerate}

\item Suppose \begin{math}x_1\neq x_2\end{math}.   Then by Lemma \ref{allfoursect6lemmas} (\ref{maxji3}) for some \begin{math}i\in \{ 1, 2\}\end{math} we have \begin{math}C_{i'}(x_i)<C_{i'}(j)\end{math}, \begin{math}[(j),C_i(j)]\subseteq \calC\end{math} and  \begin{math}[(j),C_i(j)]\end{math} is the complement of a sublattice. Since \begin{math}\calM\end{math} is a maximal sublattice, it must be the case that \begin{math}[(j),C_i(j)]=\calC\end{math}.

Notice that in this case,  since \begin{math}x_i\notin C_i(j)\end{math}, we know \begin{math}x_i\notin (j)\end{math} and \begin{math}x_i\in C_{i'}(j)\end{math} and thus \begin{math}(j)\neq C_{i'}(j)\end{math}. Hence, \begin{math}(j)\end{math} is not on chain \begin{math}C_{i'}\end{math}.
Thus in this case \begin{math}\calC\end{math} is a complement of type (\ref{cg1}) of Theorem \ref{BigCGThm}.

\item Suppose \begin{math}x_1=x_2\end{math}. Then by Lemma \ref{allfoursect6lemmas} (\ref{maxji1}) we have one of the two following possibilities:
\begin{enumerate}

 \item \begin{math}(x_1)\end{math} is on neither chain, in which case neither is \begin{math}(j)\end{math} and \begin{math}[(j),C_1(j)]\cup [(j),C_2(j)]\subseteq \calC\end{math} and is itself the complement of a sublattice, by Lemma (\ref{theyarecomps})(\ref{6.4_1})(\ref{6.4_1a}).   Thus, since \begin{math}\calM\end{math} is maximal, \begin{math}\calC=[(j),C_1(j)]\cup [(j),C_2(j)]\end{math}.   In this case \begin{math}\calC\end{math} is a complement of type (\ref{cg2}) of Theorem \ref{BigCGThm}.

 \item \begin{math}(x_1)\end{math} is on \begin{math}C_i\end{math} for one \begin{math}i\end{math}, in which case \begin{math}(j)\end{math} is also on \begin{math}C_i\end{math} and \begin{math}[(j),C_{i'}(j)]\subseteq \calC\end{math} is the complement of a sublattice by Lemma (\ref{theyarecomps})(\ref{6.4_1})(\ref{6.4_1b}).    But then \begin{math}\calC=[(j),C_{i'}(j)]\end{math} since \begin{math}\calM\end{math} is maximal. Thus \begin{math}\calC\end{math} is a complement of type (\ref{cg3}) of Theorem \ref{BigCGThm}.
\end{enumerate}
\end{enumerate}
\end{enumerate}
\end{proof}

Note that Hypotheses \ref{hyp2} and \ref{hyp3}   for CGs with \begin{math}cdim=2\end{math} follow from Theorem \ref{BigCGThm} immediately, as does an affirmative answer for Question \ref{ques2}.  In fact, we also have Corollary \ref{hyp4forCGcdim2}, below.

\begin{cor}\label{hyp4forCGcdim2} For every element \begin{math}a\end{math} in the complement of a maximal sublattice \begin{math}\calM\end{math} of a CG with \begin{math}cdim=2\end{math}, there is an \begin{math}m\in\calM\end{math} with \begin{math}m\prec a\end{math}.

\end{cor}

\begin{proof} We just have to note that the only join irreducible element of an interval \begin{math}[(j),C_i(j)]\end{math} is \begin{math}(j)\end{math}.  Indeed, if \begin{math}(j)\leq (s)\leq C_i(j)\end{math}, then \begin{math}C_i(s)=C_i(j)\end{math} as \begin{math}C_i(j)\end{math} must necessarily be the first element of \begin{math}C_i\end{math} bigger than or equal to \begin{math}(s)\end{math}. Thus \begin{math}s=j\end{math}.
\end{proof}

This suggests the following hypothesis in convex geometries.

\begin{hypothesis}\label{hyp4}
For every element \begin{math}x\in \calC\end{math} of convex geometry \begin{math}(X,\calG)\end{math}, where \begin{math}\calC\end{math} is the complement of a maximal sublattice \begin{math}\calM\end{math}, there exists \begin{math}m\prec x\end{math} with \begin{math}m\in \calM\end{math}. Moreover, \begin{math}[0,m]\subseteq \calM\end{math}.
\end{hypothesis}

Lemma  \ref{hyp4implieshyp3} tells us if Hypothesis \ref{hyp4} is correct, then Hypothesis \ref{hyp3} is correct for CGs.

\begin{lem}\label{hyp4implieshyp3} If Hypothesis \ref{hyp4} is correct, then minimal complement \begin{math}\calC\end{math} must be convex.
\end{lem}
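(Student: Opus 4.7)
The plan is to prove the contrapositive: assume $\calC$ is not convex and derive a contradiction from Hypothesis \ref{hyp4}. Suppose there exist $a, b \in \calC$ and $d \in \calM$ with $a < d < b$. I would apply Hypothesis \ref{hyp4} to the element $b \in \calC$ to obtain some $m \prec b$ with $m \in \calM$ and, crucially, $[0,m] \subseteq \calM$.

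The first key observation is that $a \nleq m$: otherwise, $a \in [0,m] \subseteq \calM$, contradicting $a \in \calC$. Consequently $d \nleq m$ as well, since $d \geq a$. Therefore $m < d \vee m$. On the other hand, $d \vee m \leq b$ since both $d$ and $m$ lie below $b$, and because $m \prec b$, the only lattice element strictly above $m$ and at most $b$ is $b$ itself. Hence $d \vee m = b$.

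Now $d, m \in \calM$ and $\calM$ is a sublattice, so $d \vee m \in \calM$. This gives $b \in \calM$, contradicting $b \in \calC$. This contradiction shows $\calC$ must be convex.

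The proof is quite short and I do not anticipate a real obstacle; the work is essentially packaged into Hypothesis \ref{hyp4} itself. The only subtle point is that the strong clause ``$[0,m] \subseteq \calM$'' is genuinely needed: without it, one could not rule out $a \leq m$, and the join $d \vee m = b$ argument would fail to produce the contradiction. The weaker statement that merely some lower cover of $b$ lies in $\calM$ would not suffice.
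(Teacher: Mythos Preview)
Your proof is correct and follows essentially the same approach as the paper's: both apply Hypothesis~\ref{hyp4} to the larger element of a would-be non-convex triple, then combine the covering relation (forcing a join to equal that element) with the downset condition $[0,m]\subseteq\calM$ to reach a contradiction. The only cosmetic difference is the order in which the two ingredients are invoked.
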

\begin{proof}
Suppose \begin{math}c_1< m< c_2\end{math} with \begin{math}m\in \calM\end{math}, \begin{math}c_1,c_2\in C\end{math}. By the hypothesis, there exists \begin{math}m_1\prec c_2\end{math}, \begin{math}m_1\in \calM\end{math}. Then \begin{math}m\leq m_1\end{math}, otherwise, \begin{math}m\vee m_1=c_2 \in \calM\end{math}. But then \begin{math}c_1 \in [0,m_1]\end{math}, a contradiction.
\end{proof}

 Note that the hypothesis fails in bounded lattices, as Figure \ref{daydoubling2} demonstrates.

\subsection{Algorithm}\label{algorithm}
\def\krok#1#2{#1 & #2 \\}
\def\io#1#2{{\begin{math}\ \end{math}}\\ \begin{tabular}{ll} {\bf In:} & #1 \\ {\bf Out:} & #2 \end{tabular} \\}
\def\afont#1{{\bf #1}}
Theorem \ref{BigCGThm} leads us to the procedure of finding all complements of maximal sublattices of a convex geometry \begin{math}\calG =\left<  X, \calG \right>\end{math} with \begin{math}cdim=2\end{math} generated by chains \begin{math}C_1\end{math} and \begin{math}C_2\end{math}.  For simplicity, we exploit the chain representation assuming that the first chain is always of the form:
\begin{equation*}
1 <_1 2 <_1 \dots <_1 n.
\end{equation*}
Then, the input to the algorithm is only number \begin{math}n\end{math} of elements and some permutation mapping \begin{math}\phi\end{math} such that the second chain is of the form:
\begin{equation*}
\phi(1) <_2 \phi(2) <_2 \dots <_2 \phi(n).
\end{equation*}

\begin{algorithm}\label{alg:complements}
\noindent
\io{\begin{math}n=|X|\end{math} and a permutation \begin{math}\phi\colon X\to X\end{math}}{all complements of the maximal sublattices in \begin{math}\left<  X, \calG \right>\end{math}.}
\begin{tabular}{ll}
\krok{1.}{\begin{math}j:=1\end{math}}
\krok{2.}{\afont{while} \begin{math}j \leq n\end{math}}
\krok{3.}{\quad \begin{math}C_1(j):=\{1,2,\ldots ,j\}\end{math}}
\krok{4.}{\quad \begin{math}C_2(j):=\{\phi(1),\ldots ,\phi(\phi^{-1}(j))\}\end{math}}
\krok{5.}{\quad \begin{math}(j):=C_1(j)\cap C_2(j)\end{math}}
\krok{6.}{\quad \begin{math}j:=j+1\end{math}}
\krok{7.}{\afont{endwhile}}
\krok{8.}{\begin{math}j:=1\end{math}}
\krok{9.}{\afont{while} \begin{math}j\leq n\end{math}}
\krok{10.}{\quad\afont{if} \begin{math}\phi^{-1}(j)\neq n\end{math} \afont{and} \begin{math}j+1=\phi(\phi^{-1}(j)+1)\end{math}}
\krok{11.}{\qquad\afont{if} \begin{math}(j)=C_2(j)\end{math}}
\krok{12.}{\quad\qquad\afont{print} \begin{math}[(j),C_1(j)]\end{math}}
\krok{13.}{\qquad\afont{else}}
\krok{14.}{\quad\qquad\afont{if} \begin{math}(j)=C_1(j)\end{math}}
\krok{15.}{\qquad\qquad\afont{print} \begin{math}[(j),C_2(j)]\end{math}}
\krok{16.}{\quad\qquad\afont{else}}
\krok{17.}{\qquad\qquad\afont{print} \begin{math}[(j),C_1(j)]\cup[(j),C_2(j)]\end{math}}
\krok{18.}{\quad\qquad\afont{endif}}
\krok{19.}{\qquad\afont{endif}}
\krok{20.}{\quad\afont{else}}
\krok{21.}{\qquad\afont{if} \begin{math}j+1\in C_2(j)\end{math}}
\krok{22.}{\quad\qquad\afont{print} \begin{math}[(j),C_1(j)]\end{math}}
\krok{23.}{\qquad\afont{endif}}
\krok{24.}{\qquad\afont{if} \begin{math}\phi^{-1}(j)\neq n\end{math} \afont{and} \begin{math}\phi(\phi^{-1}(j)+1)\in C_1(j)\end{math}}
\krok{25.}{\quad\qquad\afont{print} \begin{math}[(j),C_2(j)]\end{math}}
\krok{26.}{\qquad\afont{endif}}
\krok{27.}{\quad\afont{endif}}
\krok{28.}{\quad \begin{math}j:=j+1\end{math}}
\krok{29.}{\afont{endwhile}}
\end{tabular}
\end{algorithm}

\begin{exam}
For the CG presented on Figure \ref{fig:CompsOfCG2}, using the procedure described in Algorithm \ref{alg:complements},
we can find all complements of maximal sublattices, and, obviously, at the same time all maximal sublattices. In Table \ref{T:Example} the values for loops conditional statements are collected.
\begin{table}[hbt]
  \begin{center}
\begin{small}
\begin{equation*}
\begin{array}{|r|rrrrrrrrrr|}\hline
j                       & 1& 2& 3& 4&  5&  6&   7&    8&     9&    10\\\hline
\phi(j)           & 3& 6& 7& 10& 1& 8& 9& 5& 2& 4\\
\phi^{-1}(j)           & 5& 9& 1& 10& 8& 2& 3& 6& 7& 4\\
\phi(\phi^{-1}(j)+1)    & 8& 4& 6& -&  2&  7&   10&    9&    5&     1\\ \hline
\end{array}
\end{equation*}
\end{small}
 \caption{Values for the loop's conditional statements. \begin{math}4\end{math} is the greatest element in \begin{math}C_2\end{math}, so it has no successor.}
\label{T:Example}
  \end{center}
\end{table}
We obtain as complements: \begin{math}\{(2)=C_1(2)\}\end{math}, \begin{math}\{(4)=C_1(4)\}\end{math}, \begin{math}[(5),C_1(5)]\end{math}, \begin{math}[(5),C_2(5)]\end{math}, \begin{math}[(6),C_1(6)]\end{math}, \begin{math}[(8),C_1(8)]\cup[(8),C_2(8)]\end{math}, \begin{math}[(9),C_1(9)]\end{math}, \begin{math}[(9),C_2(9)]\end{math}, \begin{math}\{(10)=C_2(10)\}\end{math}. The singletons are just doubly irreducible elements.

\end{exam}

\begin{prop}\label{prop:complexity}
Algorithm \ref{alg:complements} runs in \begin{math}\mathcal{O}(n)\end{math} time with respect to \begin{math}n=|X|\end{math}.
\end{prop}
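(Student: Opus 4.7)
The plan is to show that, with $O(n)$ preprocessing, every iteration of the two \texttt{while} loops in Algorithm \ref{alg:complements} can be executed in constant time, giving a total running time of $O(n)$.

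First, as preprocessing I would build the inverse permutation $\phi^{-1}$ as an array (one linear pass), together with two auxiliary running-maximum arrays $P(j):=\max_{k\leq j}\phi^{-1}(k)$ and $Q(j):=\max_{i\leq \phi^{-1}(j)}\phi(i)$. Both can be populated in $O(n)$ time by straightforward incremental sweeps. Crucially, the sets $C_1(j)$, $C_2(j)$, and $(j)$ should not be materialised as explicit subsets of $X$: the chain prefix $C_1(j)=\{1,\dots,j\}$ is fully determined by the integer $j$; the chain prefix $C_2(j)$ is determined by $\phi^{-1}(j)$; and $(j)=C_1(j)\cap C_2(j)$ is encoded by the pair $(j,\phi^{-1}(j))$ together with the defining rule $k\in(j)\iff k\leq j \text{ and } \phi^{-1}(k)\leq \phi^{-1}(j)$. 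Under this implicit representation, lines 3--5 of the first loop do only constant work per iteration, so the entire first loop costs $O(n)$.

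For the second loop, each conditional test reduces to an $O(1)$ arithmetic comparison on the precomputed arrays: the equality $j+1=\phi(\phi^{-1}(j)+1)$ is a direct lookup; $j+1\in C_2(j)$ is equivalent to $\phi^{-1}(j+1)\leq \phi^{-1}(j)$; $\phi(\phi^{-1}(j)+1)\in C_1(j)$ is equivalent to $\phi(\phi^{-1}(j)+1)\leq j$; the equality $(j)=C_2(j)$ (i.e.\ $C_2(j)\subseteq C_1(j)$) is equivalent to $Q(j)\leq j$; and $(j)=C_1(j)$ is equivalent to $P(j)\leq \phi^{-1}(j)$. Each printed complement admits a constant-size description, namely the join-irreducible minimum $(j)$ (encoded by the pair $(j,\phi^{-1}(j))$) and at most two chain maxima $C_1(j)$, $C_2(j)$ (each encoded by a single integer), so each output step is $O(1)$. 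Since the second loop also runs at most $n$ times, its total cost is $O(n)$.

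The main obstacle is to justify the switch from the literal, set-based formulation of the algorithm to the implicit, index-based representation used above: if one were to enumerate the elements of $(j)$ or $C_i(j)$ at each step, the first loop alone would cost $\Theta(n^2)$, and similarly printing every element of each interval would inflate the output bound. The point is that Theorem \ref{BigCGThm} already characterises each complement by a bounded amount of combinatorial data (a single join-irreducible minimum and one or two chain-indexed maxima), so the symbolic encoding preserves correctness and is exactly what makes the $O(n)$ bound achievable. Adding the three contributions---preprocessing, first loop, second loop---yields the claimed $O(n)$ total.
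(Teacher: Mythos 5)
Your proof is correct and follows the same overall strategy as the paper's: an $\mathcal{O}(n)$ preprocessing pass followed by the observation that each of the at most $n$ iterations of the two loops performs only $\mathcal{O}(1)$ work. The genuine difference lies in how the objects $C_1(j)$, $C_2(j)$ and $(j)$ are handled. The paper asserts that all of these can be ``computed at the preprocessing stage'' at total cost $\mathcal{O}(n)$, but if they are materialised as explicit subsets of $X$ this is not literally true: the prefixes $C_1(1),\dots,C_1(n)$ alone have total size $\Theta(n^2)$, and testing $(j)=C_1(j)$ on explicit sets is not an $\mathcal{O}(1)$ operation without further justification. You address exactly this point by keeping the representation implicit --- a chain prefix is a single integer, $(j)$ is the pair $(j,\phi^{-1}(j))$, and every conditional in the second loop becomes an integer comparison against the precomputed arrays $\phi^{-1}$, $P$ and $Q$ (your reductions, e.g.\ $(j)=C_1(j)$ iff $\max_{k\leq j}\phi^{-1}(k)\leq\phi^{-1}(j)$ and $(j)=C_2(j)$ iff $\max_{i\leq\phi^{-1}(j)}\phi(i)\leq j$, are all correct). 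You also handle the output cost, noting via Theorem \ref{BigCGThm} that each printed complement has a constant-size description, a point the paper's proof passes over. In short, the two arguments reach the same bound by the same decomposition, but yours supplies the data-structure details that actually make the per-step $\mathcal{O}(1)$ claim rigorous, where the paper's version leaves them implicit.
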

\begin{proof} Notice that the representation of the input to the algorithm, as presented in \ref{algorithm},  is of the size \begin{math}n\end{math}. We would like to answer the question regarding complexity relating to the size of the input. In the loop, the algorithm processes each element of \begin{math}X\end{math}. For each element \begin{math}j\end{math}, where \begin{math}j=1,2,\dots n\end{math}, we need to compute the following objects:

\begin{itemize}
    \item \begin{math}C_1(j) = \{1,2,\dots,n\}\end{math}
    \item \begin{math}C_2(j) = \{\phi(1),\phi(2),\dots,\phi(\phi^{-1}(j))\}\end{math}
    \item \begin{math}(j) = C_1(j) \cap C_2(j)\end{math}.
\end{itemize}

All of this  may be computed at the preprocessing stage of the algorithm implementation, going from the bottom element of the chain up to the top one. Each of these computations has complexity   \begin{math}\mathcal{O}(n)\end{math} for all elements in \begin{math}X\end{math}. Hence the cost of preprocessing is \begin{math}\mathcal{O}(n)\end{math}.

Now, let us list the types of comparisons which are made within the algorithm, for some \begin{math}j, k \in \{1,2,\dots,n\}\end{math}:

\begin{enumerate}
    \item Does \begin{math}(j) = C_1(j)\end{math} ?
    \item Does \begin{math}(j) = C_2(j)\end{math} ?
    \item Does \begin{math}j + 1 \in C_2(j)\end{math} ?
    \item Does \begin{math}\phi^{-1}(j) \not = n\end{math} ?
    \item Does \begin{math}j+1=\phi(\phi^{-1}(j)+1)\end{math} ?
    \item Does \begin{math}\phi(\phi^{-1}(j)+1) \in C_1(j)\end{math} ?
\end{enumerate}

The complexity of each step is \begin{math}\mathcal{O}(1)\end{math} assuming that the preprocessing has been carried out. Since every step is of \begin{math}\mathcal{O}(1)\end{math} time complexity and these steps are performed for each \begin{math}a \in X\end{math}, the complexity of the algorithm main part is \begin{math}\mathcal{O}(n)\end{math}.

Having the preprocessing at a time cost of \begin{math}\mathcal{O}(n)\end{math} and the main part of the algorithm with the same complexity, we find that the time complexity of the presented algorithm is \begin{math}\mathcal{O}(n)\end{math}.

\end{proof}

We present the running time of the algorithm in the table. The computation was made using Sage Math Cloud, CoCalc, with a base CPU with \begin{math}3.0\end{math} GHz clock speed and a booster CPU adding up to \begin{math}4.0\end{math} GHz clock speed on the underlying CPU platform (Xeon or EPYC, depending on the run).

\begin{table}[hbt]
  \begin{center}
\begin{small}
\begin{equation*}
\begin{array}{|r|l|l|}\hline
n & \text{\textbf{Algorithm 6.9} computation time} & \text{CoCalc native method computation time}\\\hline
10 & \sim 0.77\ seconds & \sim 21.16\ seconds\\
20 & \sim 0.82\ seconds & \sim 605.36\ seconds\\
30 & \sim 0.96\ seconds & Did\ not\ accomplish\\
50 & \sim 1.02\ seconds & Did\ not\ accomplish\\
70 & \sim 20.81\ seconds & Did\ not\ accomplish\\
90 & \sim 57.96\ seconds & Did\ not\ accomplish\\
100 & \sim 79.01\ seconds & Did\ not\ accomplish\\
\hline
\end{array}
\end{equation*}
\end{small}
 \caption{Average computation time for different \begin{math}n\end{math}.}
\label{T:ComputationTimes}
  \end{center}
\end{table}

The native method available in CoCalc Sage Math Cloud is \begin{math}maximal\_sublattices()\end{math}. The input to the Algorithm \ref{alg:complements} is a permutation  \begin{math}\phi\end{math} which permutes elements \begin{math}\{1,\dots,n\}\end{math} and hence provides the second chain creating a convex geometry:
\begin{equation*}
\phi(1) < \phi(2) < \dots < \phi(n)
\end{equation*}
assuming that the first generating chain is:
\begin{equation*}
1 < 2 < \dots < n.
\end{equation*}
The algorithm exploiting the CoCalc method \begin{math}maximal\_sublattices()\end{math} requires the input in the form of the collection of pairs \begin{math}(s, e)\end{math}, sitting within a convex geometry where \begin{math}s\end{math} is a subcover of \begin{math}e\end{math}. The translation from \begin{math}\phi\end{math} representation to a subcover representation is provided in the preprocessing stage of the computation time comparison, although the translation time is not involved in the time measurement.

\subsection{Future Work}\label{futurework}

At the time of submission of this work, we have preliminary results on the structure of the Frattini sublattice (the intersection of maximal sublattices of a lattice) obtained for convex geometries of \begin{math}cdim=2\end{math}. They are based on Theorem \ref{BigCGThm}. For example, we have shown that the Boolean lattice on a 2-element set cannot be the Frattini sublattice of a CG with \begin{math}cdim=2\end{math}, which has already been shown  in \cite{A73} and \cite{CKT75} not to be the Frattini of any distributive lattice. On the other hand, in \cite{AFNS97} it has been shown that every finite bounded lattice can be represented as a Frattini lattice for a finite bounded lattice.

We also have initial observations about extending the approach and argument of Theorem \ref{BigCGThm} to convex geometries of \begin{math}cdim=3\end{math}, but they ensure more variability in cases one needs to explore.

\acknowledgements\label{sec:ack}
We are grateful to Jonathan Farley who brought our attention to Question \ref{ques1} for SD lattices, during his visit to Hofstra University in March 2018. Several undergraduate Hofstra students (Genevieve Maalouf and Sadman Sakib among them) honed their research skills by joining the seminars exploring this problem. The last author is grateful for the Fulbright Senior Award granted by the Polish-U.S. Fulbright Commission supporting her stay in the US during the 2021-2022 academic year and encouraging the participation in this project. The second author thanks Warsaw University of Technology for providing the grant for visiting Hofstra University on this project in the Fall of 2023 within the SEED - Smart Education for Engineering Doctors project under STER - Internationalization of doctoral schools programme financed by the Polish National Agency for Academic Exchange (Agreement no. PPI/STE/2020/1/00018/U/00001 dated 21.12.2020). The third author expresses gratitude to Hofstra University for supporting the research trip to Europe in May-June 2025, when the results of this project were presented.

\label{sec:biblio}


\begin{thebibliography}{99}
\bibitem[Adams(1973)]{A73} M. E. Adams, \textit{The Frattini sublattice of a distributive lattice}, Algebra Universalis {\bf 3} (1973), 216--228.

\bibitem[Adams et al.(1997)]{AFNS97} M. E. Adams, R. Freese, J. B. Nation, J. Schmid, \textit{Maximal Sublattices and Frattini Sublattices of Bounded Lattices}, J. Austral. Math. Soc., A {\bf 63} (1997), 110--127.

\bibitem[Adaricheva and Nation(2016)]{AN16} K. Adaricheva, J.B. Nation, \textit{Convex geometries}, in Lattice Theory: Special Topics and Applications, v.2, G.~Gr\"atzer and F.~Wehrung, eds. Springer, Basel, 2016.

\bibitem[Adaricheva(2016)]{A16} K. Adaricheva, private communication with J.B. Nation and J. M\"antisalo.

\bibitem[Adaricheva et al.(2024)]{5circles} K. Adaricheva et al., \textit{Convex geometries representable by at most five circles on the plane}, Involve, a Journal of Mathematics {\bf 17} (2024), 337--354. Appendices (433 pages) to be found at \texttt{https://arxiv.org/abs/2008.13077}

\bibitem[Cz\'edli and Kurusa(2019)]{CK19} G. Cz\'edli, \'A. Kurusa, \textit{A convex combinatorial property of compact
sets in the plane and its roots in lattice theory}, Categ. Gen. Algebr. Struct. Appl. {\bf 11} (2019), 57--92.

\bibitem[Chen et al.(1973)]{Chen73} C.C. Chen, K.M. Koh, S.K. Tan, \textit{Frattini sublattices of distributive lattices}, Algebra Universalis {\bf 3} (1973), 294--303.

\bibitem[Chen et al.(1975)]{CKT75}  C.C. Chen, K.M. Koh, S.K. Tan, \textit{On the Frattini sublattice of a finite distributive lattice}, Algebra Universalis {\bf 5} (1975), 88--97.

\bibitem[Day(1992)]{Day1992} A. Day, \textit{Doubling construction in lattice theory}, Canad. J. Math. {\bf 44} (1992), 252--269.

\bibitem[Edelman and Jamison(1985)]{EJ85} P. H. Edelman, R. E. Jamison, \textit{The theory of convex geometries}, Geom. Dedicata {\bf 19} (1985), 247--270.

\bibitem[Freese et al.(1995)]{FJN95} R. Freese, J. Je\v{z}ek, J.B. Nation, \textit{Free Lattices}, AMS, 1995.

\bibitem[Ganter(2019)]{Gan19} B. Ganter, \textit{``Properties of finite lattices'' by S. Reeg and W. Weiss, revisited}, ICFCA 2019, LNAI 11511, D. Cristea et al. (Eds), 99--109.

\bibitem[Gr\"atzer(2023)]{G2}   G. Gr\"atzer, \textit{The Congruences of a Finite Lattice 3rd Edition; A ``Proof-by-Picture'' Approach}, Birkh\"auser, 2023.

\bibitem[Rival(1973)]{R73} I. Rival,  \textit{Maximal Sublattices of Finite Distributive Lattices},  Proc. Amer. Math. Soc. {\bf 37} (1973), 417--420.

\bibitem[Schmid(1999)]{Sch99}
J. Schmid, \textit{On maximal sublattices of finite lattices}, Discrete Math. {\bf 199} (1999), 151--159.
\bibitem[Stern(1999)]{S99}
M. Stern, \textit{Semimodular lattices: theory and applications}, Cambridge University Press, 1999.

\end{thebibliography}
\end{document}